\documentclass[12pt]{amsart}
\usepackage{amsmath}
\usepackage{amssymb}
\usepackage{hyperref}
\usepackage[all]{xy}
\SelectTips{eu}{}
\usepackage{fullpage}
\usepackage[mathscr]{eucal}
\usepackage{mathtools}
\linespread{1.05}
\allowdisplaybreaks
\numberwithin{equation}{section}
\newtheorem{lemma}[equation]{Lemma}
\newtheorem{theorem}[equation]{Theorem}
\newtheorem{prop}[equation]{Proposition}
\newtheorem{cor}[equation]{Corollary}

\theoremstyle{definition}
\newtheorem{defn}[equation]{Definition}

\newtheorem{rk}[equation]{Remark}
\newtheorem{rks}[equation]{Remarks}

\newcommand{\bF}{\mathbb F}
\newcommand{\bC}{\mathbb C}
\newcommand{\bQ}{\mathbb Q}
\newcommand{\bR}{\mathbb R}
\newcommand{\bZ}{\mathbb Z}

\newcommand{\fb}{\mathfrak b}

\newcommand{\fH}{\mathfrak{H}}
\newcommand{\fK}{\mathfrak{K}}

\newcommand{\fq}{\mathfrak q}

\newcommand{\bi}{\mathrm{i}}
\newcommand{\mra}{\mathrm{a}}
\newcommand{\mrb}{\mathrm{b}}
\newcommand{\mrc}{\mathrm{c}}
\newcommand{\mrd}{\mathrm{d}}
\newcommand{\Aut}{\mathsf{Aut}}
\newcommand{\Diag}{\mathsf{Diag}}

\newcommand{\Gasp}{\Gamma}
\newcommand{\mcg}[1]{\Gamma_{#1}}

\newcommand{\Hom}{\mathsf{Hom}}

\newcommand{\Inn}{\mathsf{Inn}}

\newcommand{\Mp}{\mathsf{Mp}}
\newcommand{\Orth}{\mathsf{O}}
\newcommand{\Out}{\mathsf{Out}}

\newcommand{\sig}{\text{\rm signature}}
\newcommand{\Sp}{\mathsf{Sp}}
\newcommand{\Spq}{\Sp^{\mathsf q}}
\newcommand{\fsp}{\mathfrak{sp}}

\renewcommand{\leq}{\leqslant}
\renewcommand{\ge}{\geqslant}

\newcommand{\UU}{\mathsf{U}}

\newcommand{\Tr}{\mathsf{Tr}}

\hyphenation{classifies}

\title{Cohomology of symplectic groups and Meyer's signature theorem}

\author{Dave Benson}
\address{Institute of Mathematics, University of Aberdeen, Aberdeen
  AB24 3UE, Scotland, United Kingdom}
\author{Caterina Campagnolo}
\address{Department of Mathematics, Karlsruhe Institute of Technology,
D-76128 Karlsruhe, Germany}
\author{Andrew Ranicki}
\address{School of Mathematics, University of Edinburgh, Edinburgh EH9
  3FD, Scotland, United Kingdom}
\author{Carmen Rovi}
\address{Department of Mathematics, Indiana University, Bloomington IN 47405, USA}

\begin{document}

\begin{abstract}
Meyer showed that the signature of a closed oriented surface bundle over a surface 
is a multiple of $4$, and can be computed using an element of
$H^2(\Sp(2g, \bZ),\bZ)$. 
If we denote  by $1 \to \bZ \to \widetilde{\Sp(2g,\bZ)} \to \Sp(2g,\bZ) \to 1$ the pullback 
of the universal cover of $\Sp(2g,\bR)$, then by a theorem of 
Deligne, every finite index subgroup 
of $\widetilde{\Sp(2g, \bZ)}$ contains $2\bZ$. As a consequence, a class in the 
second cohomology of any finite quotient of $\Sp(2g, \bZ)$ can at most enable
us to compute the signature of a surface bundle modulo $8$. We show that this 
is in fact possible and investigate the smallest quotient of $\Sp(2g, \bZ)$ that 
contains this information. This quotient $\fH$ is a nonsplit extension of $\Sp(2g,2)$ 
by an elementary abelian group of order $2^{2g+1}$. There is a 
central extension $1\to \bZ/2\to\tilde{\fH}\to\fH\to 1$, and $\tilde{\fH}$ 
appears as a quotient of the metaplectic double cover 
$\Mp(2g,\bZ)=\widetilde{\Sp(2g,\bZ)}/2\bZ$. It is an extension of 
$\Sp(2g,2)$ by an almost extraspecial group of order $2^{2g+2}$, and 
has a faithful irreducible complex representation of dimension $2^g$.
Provided $g\ge 4$, the extension $\widetilde{\fH}$ is the universal central
extension of $\fH$.
Putting all this together, in Section \ref{se:sigmod8} we provide a recipe for
computing the signature modulo $8$, and indicate some consequences.
\end{abstract}

\subjclass[2010]{20J06 (primary); 55R10, 20C33 (secondary)}

\maketitle

\section{Introduction}

Let $\Sigma_g \to M \to \Sigma_h$ be an oriented surface bundle over a surface. 
This is determined by a homotopy class of maps 
$\Sigma_h \to B\Aut^+(\Sigma_g)$. If $g\ge 2$ then
the connected components of $\Aut^+(\Sigma_g)$ are contractible
(Corollary 19 in Luke and Mason~\cite{Luke/Mason:1972a}; see also 
Earle and Eells~\cite{Earle/Eells:1969a} and Hamstrom~\cite{Hamstrom:1966a}), and 
$\pi_0\,\Aut^+(\Sigma_g)=\Gamma_g$ is the (orientation preserving) 
mapping class group of $\Sigma_g$. So $B\Aut^+(\Sigma_g)\simeq B\Gamma_g$,
and the bundle is classified by a homotopy class of maps 
$\Sigma_h\to B\Gamma_g$, or equivalently by the monodromy homomorphism
\[ \pi_1(\Sigma_h) = \langle a_1,b_1,\dots,a_h,b_h\mid
  [a_1,b_1]\dots[a_h,b_h]=1\rangle \to \Gamma_g. \]
Now $\Gamma_g$ acts on $H^1(\Sigma_g,\bZ)\cong \bZ^{2g}$ preserving
the symplectic form given by cup product into $H^2(\Sigma_g,\bZ)\cong
\bZ$. So we have a map $\Gamma_g \to \Sp(2g,\bZ)$, which is surjective.
Composing, we obtain a map
\[ \chi\colon \pi_1(\Sigma_h) \to \Gamma_g \to \Sp(2g,\bZ), \]
and an induced map in cohomology
\[ \chi^* \colon H^2(\Sp(2g,\bZ),\bZ) \to H^2(\pi_1(\Sigma_h),\bZ). \]
Meyer \cite{Meyer:1973a} constructed a $2$-cocycle $\tau$ on
$\Sp(2g,\bZ)$ such that 
\[ \sig(M)=\langle \chi^*[\tau],[\Sigma_h]\rangle \in 4\bZ\subseteq \bZ \]
with $[\tau]=4$ in $H^2(\Sp(2g,\bZ),\bZ)\cong \bZ$ for $g\ge 3$, and
$[\tau]/4=1$ in $H^2(\Sp(2g,\bZ),\bZ)\cong \bZ$ classifying the universal 
central extension of $\Sp(2g,\bZ)$.
Let  
\[ ([\tau]/4)_2 \in H^2(\Sp(2g,\bZ),\bZ/2)\cong\bZ/2 \] 
be the mod $2$ reduction. The mod $2$ residue 
\[ \sig(M)/4 = \langle\chi^*[\tau]/4,[\Sigma_h]\rangle
  =\langle\chi^*([\tau]/4)_2,[\Sigma_h]\rangle \in \bZ/2  \]
was identified by Rovi \cite{Rovi:AGT} with the Arf--Kervaire 
invariant of a Pontryagin squaring operation.

Our main purpose in this paper is to construct a normal subgroup 
$\fK$ of $\Sp(2g,\bZ)$  for $g \geq 1$ with finite quotient 
$\fH=\Sp(2g,\bZ)/\fK$ of shape  $2^{2g+1}\,^{\boldsymbol\cdot}\,\Sp(2g,2)$
(for notation describing group extensions, see Section 5.2 of the Introduction to the Atlas \cite{Atlas}). 
Let $p\colon\Sp(2g,\bZ)\to\fH$ be the projection.
There is a nonzero element $c \in H^2(\fH,\bZ/2)$
(for $g\ge 4$ we have $H^2(\fH,\bZ/2)\cong\bZ/2$ 
but there are extraneous summands inflated from $H^2(\Sp(2g,2),\bZ/2)$ for small $g$) which
classifies a nonsplit double cover $\tilde{\fH}$ of $\fH$. The inflation  
$p^*(c) = [\tau/4]_2$ in $H^2(\Sp(2g,\bZ),\bZ/2)\cong\bZ/2$
classifies the metaplectic double cover $\Mp(2g,\bZ)$ of $\Sp(2g,\bZ)$.
Now $p$  factors through $\Sp(2g,\bZ/4)$ so that 
we obtain as a consequence that $\sig(M)/4 \in \bZ/2$ only 
depends on the $\bZ/4$-coefficient monodromy 
$\chi_4\colon\pi_1(\Sigma_h) \to \Sp(2g,\bZ/4)$
(this was already proved by a different method by 
Korzeniewski \cite{Korzeniewski:2005a}). 
The sequence of group homomorphisms 
\[ \Sp(2g,\bZ) \to \Sp(2g,\bZ/4) \to \fH \to \UU(2^g,\bQ[\bi])/\{\pm 1\} \]
lifts to a sequence of double covers
\[ \Mp(2g,\bZ) \to \widetilde{\Sp(2g,\bZ/4)} \to \tilde\fH \to
  \UU(2^g,\bQ[\bi]) \]
which is used in the recipe of Section \ref{se:sigmod8} for the signature
modulo $8$. The faithful representation $\tilde{\fH}\to \UU(2^g,\bQ[\bi])$ 
is investigated in Benson~\cite{Benson:theta}.

Denote by $\widetilde{\Sp(2g,\bZ)}$ the central extension obtained by
pulling back the universal cover of $\Sp(2g,\bR)$:
\[ \xymatrix{1 \ar[r] & \bZ \ar[r] \ar@{=}[d] &
    \widetilde{\Sp(2g,\bZ)} \ar[r] \ar[d] & \Sp(2g,\bZ) \ar[r] \ar[d]
    & 1 \\ 1 \ar[r] & \bZ \ar[r] & \widetilde{\Sp(2g,\bR)} \ar[r] &
    \Sp(2g,\bR) \ar[r] & 1} \]
Then for $g\ge 4$ the group $\widetilde{\Sp(2g,\bZ)}$ is the universal
central extension of $\Sp(2g,\bZ)$, while for $g=3$ there is an extra
copy of $\bZ/2$ coming from the fact that $\Sp(6,2)$ has an
exceptional double cover (see Lemma \ref{le:H2}).
Note also that the centre of $\Sp(2g,\bZ)$
has order two. The centre of $\widetilde{\Sp(2g,\bZ)}$ is twice
as big as the subgroup $\bZ$ displayed above; it is isomorphic to
$\bZ\times \bZ/2$ if $g$ is even, and $\bZ$ if $g$ is odd.

A theorem of Deligne \cite{Deligne:1978a} implies that
the group $\widetilde{\Sp(2g,\bZ)}$ is not residually finite. Every
subgroup of finite index contains the subgroup $2\bZ$.
To rephrase, every finite quotient of
$\widetilde{\Sp(2g,\bZ)}$ is in fact a finite quotient of the
metaplectic double cover $\Mp(2g,\bZ)$ of $\Sp(2g,\bZ)$ defined by
\[ \xymatrix{&1\ar[d]&1\ar[d]\\
&2\bZ \ar@{=}[r]\ar[d]&2\bZ \ar[d] \\
1 \ar[r] & \bZ \ar[r]\ar[d] & \widetilde{\Sp(2g,\bZ)} \ar[r]
\ar[d] & \Sp(2g,\bZ) \ar[r] \ar@{=}[d] & 1 \\
1 \ar[r] & \bZ/2 \ar[r] \ar[d] & \Mp(2g,\bZ) \ar[r] \ar[d] &
\Sp(2g,\bZ) \ar[r] & 1 \\
& 1 & 1 } \]
As a consequence, if we compose
$\chi$ with the map to a finite quotient of $\Sp(2g,\bZ)$, we
lose information about the signature; the best we can hope to 
do is compute the signature modulo $8$. We shall discuss this
in greater detail elsewhere.

An outline of the paper is as follows. In Section \ref{se:K}, we
describe the subgroup $\fK\leq \Sp(2g,\bZ)$ and quotient
$\fH=\Sp(2g,\bZ)/\fK$. Their properties are
described in Theorem \ref{th:main}, and the proof occupies 
much of the rest of
the paper. Section \ref{se:extraspecial} contains background and
references on extraspecial and almost extraspecial groups, and 
explains what this has to do with the structure of $\fH$ and its
double cover $\tilde\fH$. We describe a faithful unitary representation
$\tilde\fH \to \UU(2^g,\bQ[\bi])$,
which inflates to a representation $\Mp(2g,\bZ) \to \UU(2^g,\bQ[\bi])$,
and which is investigated in greater detail in \cite{Benson:theta}.
Section \ref{se:sigmod8} uses this representation to give a recipe for
computing the signature modulo $8$ of a surface bundle over a
surface. The rest of the paper consists of cohomology computations. In
preparation for this, in Section \ref{se:symp} we discuss the Lie algebra
of the symplectic group. We show that as a module, it is isomorphic to the 
divided square of the natural module, and we discuss the submodule structure.
This enables us in Section \ref{se:coho} to exploit the five-term sequence to
compute $H_2(\fH)$ and $H_2(\Sp(2g,\bZ/2^n))$ for $n\ge 2$. Provided that
$g\ge 4$, these are isomorphic to $\bZ/2$.

\subsection*{Acknowledgements}
Campagnolo acknowledges support by the Swiss National Science Foundation, grant number PP00P2-128309/1, and by the German Science Foundation via the Research Training Group 2229, under which this research was started and then completed.

\section{The subgroup $\fK\leq \Sp(2g,\bZ)$ and the main theorem}\label{se:K}

Denote by $J$ the $2g\times 2g$ matrix
\[ \begin{pmatrix} 0 & I \\ -I & 0 \end{pmatrix}. \]
Regarding $J$ as a symplectic form, we have 
\[ \Sp(2g,\bZ) = \left\{\begin{pmatrix} A&B\\C&D\end{pmatrix} = X 
\mid X^tJX=J\right\}. \]
Since $J^{-1}=-J$, a matrix is symplectic if and only if its transpose
is symplectic. Writing out the above condition explicitly, a matrix is
symplectic if and only if\smallskip

(i) $AB^t$ and $CD^t$ are symmetric, and $AD^t-BC^t=I$, or equivalently\smallskip

(ii) $A^tC$ and $B^tD$ are symmetric, and $A^tD-C^tB=I$.\smallskip

\noindent
We write $\Sp(2g,2)$ for the matrices satisfying the same conditions
over $\bF_2$, and note that reduction modulo two
$\Sp(2g,\bZ)\to\Sp(2g,2)$ is surjective (Newman and Smart \cite{Newman/Smart:1964a}).

We write $\Gamma(2g,N)\leq\Sp(2g,\bZ)$ for the \emph{principal congruence subgroup}
consisting of symplectic matrices which are congruent to the identity
modulo $N$. We write $\Gamma(2g,N,2N)$ for the
\emph{Igusa subgroup} \cite{Igusa:1964a} of $\Gamma(2g,N)$ consisting of the matrices
$\left(\begin{smallmatrix}A&B\\C&D\end{smallmatrix}\right)$
where the entries of $\Diag(AB^t)$ and $\Diag(CD^t)$ are divisible by
$2N$, or equivalently where the entries of $\Diag(A^tC)$ and
$\Diag(B^tD)$ are divisible by $2N$. If $N=1$, this is the \emph{theta subgroup}, also known as the
\emph{symplectic quadratic group}, and denoted $\Spq(2g,\bZ)$. It is the inverse
image in $\Sp(2g,\bZ)$ of the orthogonal subgroup 
$\Orth^+(2g,2)\leq\Sp(2g,2)$.

\begin{defn}
We write $\fK$ for the subgroup of $\Sp(2g,\bZ)$ consisting of
matrices
\[ \begin{pmatrix} I+2\mra & 2\mrb \\ 2\mrc & I+2\mrd \end{pmatrix} \in
  \Sp(2g,\bZ) \]
satisfying the following:
\begin{enumerate}
\item[\rm (i)] The vectors of diagonal entries $\Diag(\mrb)$ and $\Diag(\mrc)$ are even, and
\item[\rm (ii)] the trace $\Tr(a)$ is even.
\end{enumerate}
\end{defn}

Thus we have $\Gamma(2g,4) \leq \fK \leq \Gamma(2g,2)$ and 
$|\Gamma(2g,2):\fK|=2^{2g+1}$. The interpretation of the subgroup 
$\fK$ is that it is the inverse image
in $\Sp(2g,\bZ)$ of the largest subspace of
$\Gamma(2g,2)/\Gamma(2g,4)$ on which 
the quadratic form in Theorem \ref{th:main}(iv) is identically zero.

Our main theorem is as follows. We assume that $g\ge 4$ for  the
purpose of simplifying the statements. In an appendix we include
statements for all values of $g$. The main difference for low values
of $g$ is that the cohomology of $\Sp(2g,2)$ in degrees one and 
two contributes some further annoying complications.

\begin{theorem}\label{th:main}
Let $g\ge 4$. 
\begin{enumerate}
\item[\rm (i)] $\fK$ is a normal subgroup of $\Sp(2g,\bZ)$. We write
$\fH$ for the quotient $\Sp(2g,\bZ)/\fK$.
\item[\rm (ii)] The quotient $\Gamma(2g,2)/\fK \leq \fH$ is an elementary abelian
  $2$-group $(\bZ/2)^{2g+1}$.
\item[\rm (iii)] The extension
\[ 1 \to (\bZ/2)^{2g+1} \to \fH \to \Sp(2g,2) \to 1 \]
does not split.
\item[\rm (iv)] The group $(\bZ/2)^{2g+1}$ supports an invariant
  quadratic form $\fq$ given by
\[ \fq\begin{pmatrix} I+2\mra & 2\mrb \\ 2\mrc & I+2\mrd\end{pmatrix}
=\Tr(\mra)+\langle \Diag(\mrb),\Diag(\mrc)\rangle  \]
(see Remark \ref{rk:qbsp} for the definition of the pointy brackets here).
\item[\rm (v)] The action of $\Sp(2g,2)$ on $(\bZ/2)^{2g+1}$
  described by the extension in  {\rm (iii)}
  gives the exceptional isomorphism $\Sp(2g,2) \cong \Orth(2g+1,2)$,
 the orthogonal group of the quadratic form $\fq$.
\item[\rm (vi)] We have
$H^2(\fH,\bZ/2) \cong \bZ/2$, and an associated central extension
\[ 1 \to \bZ/2 \to \tilde\fH \to \fH \to 1. \]
\item[\rm (vii)] For $n\ge 2$, the inflation map $H^2(\fH,\bZ/2)\to
  H^2(\Sp(2g,\bZ/2^n),\bZ/2)$ is an isomorphism.
\item[\rm (viii)] The nonzero element of $H^2(\Sp(2g,\bZ)/\fK,\bZ/2)$
  inflates to the reduction modulo two of $\frac{1}{4}[\tau]$ as an
  element of $H^2(\Sp(2g,\bZ),\bZ/2)$.
\item[\rm (ix)] Restricting the central extension of $\fH$ to the
  subgroup $\Gamma(2g,2)/\fK$ gives an almost extraspecial group
  $2^{1+(2g+1)}\leq \tilde\fH$.
\end{enumerate}
\end{theorem}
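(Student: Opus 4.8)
The plan is to reformulate (ix) as a statement about a single quadratic form and then identify that form with the $\fq$ of part (iv). Write $\tilde V\leq\tilde\fH$ for the preimage of $V:=\Gamma(2g,2)/\fK$ under $\tilde\fH\to\fH$, so that $1\to\bZ/2\to\tilde V\to V\to1$ is the pullback along $V\hookrightarrow\fH$ of the central extension in (vi). Since $V\cong(\bZ/2)^{2g+1}$ by (ii), the group $\tilde V$ has order $2^{2g+2}$ and exponent dividing $4$; the extension is classified by $c|_V\in H^2(V,\bZ/2)=S^2(V^*)$, and it carries a squaring map $\fq_{\tilde V}\colon V\to\bZ/2$, $v\mapsto\tilde v^2$, a quadratic form whose polar form $b(v,w)=[\tilde v,\tilde w]$ is the commutator pairing. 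By the discussion of almost extraspecial groups in Section~\ref{se:extraspecial}, $\tilde V$ is the almost extraspecial group $2^{1+(2g+1)}$ exactly when $\fq_{\tilde V}$ is non-degenerate, i.e. $b$ has a one-dimensional radical on which $\fq_{\tilde V}$ does not vanish: then $\tilde V$ is the central product over its centre of the extraspecial group on a hyperbolic complement with the cyclic group of order $4$ generated by a lift of a radical vector (and this group is independent of the type of $b$). So it is enough to show that $\fq_{\tilde V}$ is non-degenerate, and in fact that $\fq_{\tilde V}=\fq$.

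First I would use invariance to reduce to finitely many candidates. Since $c$ lives on $\fH$ and $\fH/V\cong\Sp(2g,2)$, the class $c|_V$, hence $\fq_{\tilde V}$, is $\Sp(2g,2)$-invariant. By (iv) and (v), $V$ is the natural module for $\Sp(2g,2)\cong\Orth(2g+1,2)$ with invariant quadratic form $\fq$, whose polar form $b_\fq$ has a single radical line $\langle r\rangle$ (the $\Tr(\mra)$ direction) with $\fq(r)=1$, and $V/\langle r\rangle$ is the natural symplectic module. A short module computation — via the sequence $0\to(V^*)^{(2)}\to S^2(V^*)\to\Lambda^2(V^*)\to0$, the fact that $b_\fq$ is the only nonzero invariant alternating form, and that the invariant linear forms on $V$ span at most a line — then gives that $(S^2 V^*)^{\Sp(2g,2)}$ is spanned by $\fq$ together with at most one invariant linear form $\ell_0$. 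Since $c|_V\neq0$ (this is part of the computation of $H^2(\fH,\bZ/2)$ in Section~\ref{se:coho}; equivalently, if $c|_V=0$ then the class $[\tau/4]_2$ of (viii) would be inflated from $\Sp(2g,2)$, which it is not), I would conclude $\fq_{\tilde V}\in\{\fq,\ \ell_0,\ \fq+\ell_0\}$.

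What remains is to exclude $\fq_{\tilde V}=\ell_0$ (for which $b=0$ and $\tilde V\cong\bZ/4\times(\bZ/2)^{2g}$) and $\fq_{\tilde V}=\fq+\ell_0$ (for which $b=b_\fq$ but $\fq_{\tilde V}(r)=0$, so $\tilde V\cong\bZ/2\times 2^{1+2g}$). Equivalently, I must show (a) $b\neq0$, i.e. $\tilde V$ is nonabelian, and (b) $\fq_{\tilde V}(r)=1$, i.e. a lift of $r$ to $\tilde\fH$ has order $4$. By (viii), $\tilde\fH$ carries the restriction of the metaplectic cover, so $b$ and $\fq_{\tilde V}$ are the pushforwards along $\Gamma(2g,2)/\Gamma(2g,4)=\fsp(2g,2)\to V$ of the commutator pairing and squaring form of $\Mp(2g,\bZ)$ over $\Gamma(2g,2)$; (a) and (b) would then be extracted from the explicit metaplectic cocycle, using the submodule structure of $\fsp(2g,2)$ from Section~\ref{se:symp} — the commutator pairing inducing the symplectic form on the natural $2g$-dimensional quotient, and the squaring form restricting to the nontrivial linear form on the radical. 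For (b), when $g$ is odd one can alternatively use the classical fact that $-I\in\Gamma(2g,2)$, whose image in $V$ is then $r$, lifts to an element of order $4$ in $\Mp(2g,\bZ)$; the case $g$ even requires a different representative of $r$ and the same cocycle input. I expect this last step to be the main obstacle: one must control the metaplectic squaring form on $\fsp(2g,2)$ precisely enough — uniformly in $g$, with the right signs — to see that it is \emph{nonzero} on the radical vector $r$, so that the centre of $\tilde V$ comes out cyclic of order $4$ rather than elementary abelian. Once (a) and (b) are in hand, $\fq_{\tilde V}=\fq$ is non-degenerate and $\tilde V\cong 2^{1+(2g+1)}$, completing (ix).
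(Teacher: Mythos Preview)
Your approach is genuinely different from the paper's. The paper never computes the space of invariant quadratic forms on $V=Z$; instead it identifies $\fH$ with the quotient $\bar H_0=H_0/Z(H_0)$ of Griess's group $H_0$ (Proposition~\ref{pr:barH0}), using Dempwolff's theorem that $H^2(\Sp(2g,2),U)\cong\bF_2$ to match the two nonsplit extensions of $\Sp(2g,2)$ by the natural symplectic module. Since $H_2(\fH)\cong\bZ/2$, the unique nontrivial double cover is $\tilde\fH\cong H_0$, and $O_2(H_0)$ is almost extraspecial by Griess's construction. No metaplectic cocycle computation is needed.

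Your invariant--theoretic reduction can be sharpened, and doing so removes most of what you flag as ``the main obstacle''. For $g\ge 2$ the module $Z$ is uniserial with socle $\bF_2$ and head $U$ (see the remark before Lemma~\ref{le:H0Y}); in particular $Z$ has no trivial quotient, so $(V^*)^{\Sp(2g,2)}=\Hom_{\Sp(2g,2)}(Z,\bF_2)=0$ and there is \emph{no} invariant linear form $\ell_0$. Your exact sequence then gives $(S^2V^*)^{\Sp(2g,2)}\hookrightarrow(\Lambda^2V^*)^{\Sp(2g,2)}=\langle b_\fq\rangle$, hence the only nonzero invariant quadratic form is $\fq$. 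The cases $\ell_0$ and $\fq+\ell_0$ never arise, and your steps (a) and (b) are unnecessary: it suffices to show $c|_V\neq 0$.

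The genuine gap is precisely there. Both justifications you offer for $c|_V\neq 0$ are circular within the paper's logic. The computation in Section~\ref{se:coho} that $H_2(\fH)\neq 0$ (proof of Theorem~\ref{th:H2Sp2gZ4}) already invokes Proposition~\ref{pr:barH0} and the nonabelianness of $E\leq H_0$ --- that is, it uses (ix). And ``if $c|_V=0$ then $c$ is inflated from $\Sp(2g,2)$'' ignores the possible $E_2^{1,1}=H^1(\Sp(2g,2),V^*)$ term in the Lyndon--Hochschild--Serre spectral sequence; you would need either to compute that group or to evaluate $[\tau/4]_2|_{\Gamma(2g,2)}$ from the explicit metaplectic cocycle and show it descends to a nonzero class on $V$. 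You acknowledge this cocycle input but do not supply it, so as written the argument for (ix) is not complete independently of the paper's Griess--Dempwolff route.
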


The proof of this theorem occupies the rest of the paper.

\section{Extraspecial and almost extraspecial groups}\label{se:extraspecial}

For background on extraspecial and almost extraspecial groups, we
refer the reader to Section I.5.5 of Gorenstein \cite{Gorenstein:1968a} and 
Section III.13 of Huppert \cite{Huppert:1967a}, as well as the papers of
Bouc and Mazza~\cite{Bouc/Mazza:2004a},
Carlson and Th\'evenaz~\cite{Carlson/Thevenaz:2000a},
Glasby~\cite{Glasby:1995a}, Griess~\cite{Griess:1973a}, Hall and
Higman~\cite{Hall/Higman:1956a}, 
Lam and Smith~\cite{Lam/Smith:1989a},
Quillen~\cite{Quillen:1971d},
Schmid~\cite{Schmid:2000a},
Stancu~\cite{Stancu:2002a}, and the letter from Isaacs to Diaconis 
reproduced in the appendix of Diaconis~\cite{Diaconis:2010a}.

The cohomology ring $H^*((\bZ/2)^n,\bZ/2)$ is a polynomial ring in
generators $z_1,\dots,z_n$ of degree one. Thus
\[ H^1((\bZ/2)^n,\bZ/2)=\Hom((\bZ/2)^n,\bZ/2) \]
is an $n$-dimensional vector space spanned by the linear forms
$z_1,\dots,z_n$. An element of degree two is therefore a quadratic
form $\fq$ on $(\bZ/2)^n$. Letting $\fb$ be the associated bilinear
form $(\bZ/2)^n \times (\bZ/2)^n \to \bZ/2$, we have
\[ \fq(x+y) = \fq(x) + \fq(y) + \fb(x,y). \]

In the corresponding central extension
\[ 1 \to \bZ/2 \to E \to (\bZ/2)^n \to 1 \]
the role played by $\fq$ and $\fb$ 
is as follows. If $x$ and $y$ are elements of $(\bZ/2)^n$,
choose preimages $\hat x$ and $\hat y$ in $E$. Then 
as elements of the central $\bZ/2$, we have $\hat x^2=\fq(x)$ and
$[\hat x,\hat y] = \fb(x,y)$.

\begin{defn} 
We say that a quadratic form $\fq$ is \emph{nonsingular} if the
radical $\fb^\perp$ of the associated bilinear form $\fb$ is $\{0\}$, and
\emph{nondegenerate} if $\fb^\perp \cap \fq^{-1}(0) = \{0\}$.
\end{defn}

If $\fq$ is nonsingular then $n=2g$ is even; in this case there are two
isomorphism classes of quadratic forms, distinguished by the Arf
invariant. The corresponding groups $E$ defined by the central extension
\[ 1 \to \bZ/2 \to E \to (\bZ/2)^{2g} \to 1  \]
are called \emph{extraspecial $2$-groups}, and are characterised by the
properties 
\[ \Phi(E)=[E,E] = Z(E) \cong \bZ/2. \]
The two isomorphism classes of extraspecial groups are denoted $2^{1+2g}_+$ (Arf invariant
zero) and $2^{1+2g}_-$ (Arf invariant one).

If $\fq$ is singular but nondegenerate then $n=2g+1$ is odd; in this
case there is one isomorphism class of quadratic forms. The
corresponding groups $E$ defined by the central extension
\[ 1 \to \bZ/2 \to E \to (\bZ/2)^{2g+1} \to 1 \]
are called \emph{almost extraspecial groups}. 
The central product of $\bZ/4$ with an extraspecial group of either
isomorphism type of order $2^{1+2g}$ gives the almost extraspecial group
of order $2^{1+(2g+1)}$.

If $G$ is a group, we write $\Aut(G)$ for the group of automorphisms
of $G$, we write $\Out(G)$ for the group of outer automorphisms, and we write $\Inn(G)$
for the group of inner automorphisms. These fit into short exact
sequences
\begin{gather*}
1 \to Z(G) \to G \to \Inn(G) \to 1 \,\mbox{ and }\,
1 \to \Inn(G) \to \Aut(G) \to \Out(G) \to 1. 
\end{gather*}
Writing the automorphism groups of the extraspecial and almost extraspecial  
groups as extensions of the outer by the inner automorphisms in this way, we have sequences
\begin{gather} 
1 \to (\bZ/2)^{2g} \to \Aut(2^{1+2g}_+) \to \Orth^+(2g,2) \to 1, \notag \\
1 \to (\bZ/2)^{2g} \to \Aut(2^{1+2g}_-) \to \Orth^-(2g,2) \to 1, \notag \\
1 \to (\bZ/2)^{2g} \to \Aut(2^{1+(2g+1)}) \to \Sp(2g,2) \times
\bZ/2 \to 1, \label{eq:Aut-almost-exspec}
\end{gather}
which do not split provided $g\ge 4$. It is the last case that is of
interest to us: in this case the extra
factor of $\bZ/2$ in the outer automorphism group $\Out(E)$ acts by inverting the central
element of order four, and for $g\ge 3$ the derived subgroup
$\Out(E)'$ is $\Sp(2g,2)$.

It was proved by Griess \cite{Griess:1973a} using 
 representation theory, that in each case, there is an extension of
the extraspecial group by its outer automorphism group, and of the
almost extraspecial group by the subgroup of index two in its outer
automorphism group.

We are interested in the almost extraspecial case. In this case, what
Griess proved (part (b) of Theorem 5 of \cite{Griess:1973a}) is that
there is a group which he denotes $H_0$ of shape $2^{1+(2g+1)}\Sp(2g,2)$, with the
following properties. 
The normal $2$-subgroup $O_2(H_0)$ is the almost extraspecial group
$2^{1+(2g+1)}$, and the quotient $H_0/Z(H_0)$
is isomorphic to the subgroup of index two in $\Aut(2^{1+(2g+1)})$.

Dempwolff \cite{Dempwolff:1974a} proved that for $g\ge 2$ there is a
unique isomorphism class of nonsplit extensions of $\Sp(2g,2)$ by 
an elementary abelian group $(\bZ/2)^{2g}$ with nontrivial action. We
shall combine the results of Griess and Dempwolff to show that the
group $\fH$ of Theorem \ref{th:main} is isomorphic to the quotient of 
Griess' group $H_0$ by the central subgroup of order two. This in turn
allows us to compute $H^2(\fH,\bZ/2)$ and relate it to $H^2(\Sp(2g,\bZ),\bZ)$.

There is another approach to this, which we describe in a separate
paper \cite{Benson:theta}.
This avoids the use of the
theorems of Griess and Dempwolff, replacing them with a computation 
showing that the group $\tilde \fH$ has a 
Curtis--Tits--Steinberg type presentation. This approach is closely
related to the action of $\tilde \fH$ on a certain $2^g$ dimensional space
of theta functions, and shows that the following defines a 
projective representation $\sigma\colon\Sp(2g,\bZ)\to
\UU(2^g,\bQ[\bi])/\{\pm I\}$ with kernel $\fK$, and then induces
a $2^g$ dimensional representation $\tilde \fH\to\UU(2^g,\bQ[\bi])$.

The underlying vector space for the representation 
has as a basis the vectors $e_w$ for $w\in\{0,1\}^g$.
In the following matrices, we regard $\det A$,
which is really an element of $(\bZ/4)^\times=\{1,-1\}$, as
being either $+1$ or $-1$ in $\bC$, and $\sqrt{\det A}$ is either 
$1$ or $\bi$.
\begin{align*}
\sigma
\begin{pmatrix} I & B \\ 0 & I \end{pmatrix} &\colon
e_w\mapsto \bi^{w^tBw}e_w,   \\
\sigma\begin{pmatrix} A & 0 \\ 0 & (A^t)^{-1} \end{pmatrix} &\colon
e_w\mapsto \sqrt{\det A}\ e_{(A^t)^{-1}w},  \\
\sigma\begin{pmatrix} 0 & I \\ -I & 0 \end{pmatrix} &\colon
e_w\mapsto \frac{1}{(1-\bi)^g}
\sum_{w'}(-1)^{w^tw'}e_{w'}. 
\end{align*}
Note that $\Sp(2g,\bZ)$ is generated by these elements, but it is not
at all obvious that the relations in $\Sp(2g,\bZ)$ hold up to sign for the linear
transformations listed here; this is proved in \cite{Benson:theta}. 
Note also that in the first formula
above, the matrix $B$ may be interpreted as having diagonal entries in
$\bZ/4$ and off-diagonal entries in $\bZ/2$, so that it represents a
quadratic form on $(\bZ/2)^g$, taking values in $\bZ/4$.

Further references for the representation described here
include Funar and Pitsch~\cite{Funar/Pitsch:preprint}, 
Glasby~\cite{Glasby:1995a}, 
Gocho~\cite{Gocho:1990a,Gocho:1990b},
Nebe, Rains and Sloane~\cite{Nebe/Rains/Sloane:2001a}, 
Runge~\cite{Runge:1993a,Runge:1995a,Runge:1996a}, 
and Tsushima~\cite{Tsushima:2003a}.

\section{Signature modulo eight}\label{se:sigmod8}

Given an oriented surface bundle over a surface $\Sigma_g \to M \to
\Sigma_h$, recall that there is an associated map $\chi\colon
\pi_1(\Sigma_h) \to \Sp(2g,\bZ)$. Composing with $\sigma \colon
\Sp(2g,\bZ) \to \UU(2^g,\bQ[\bi])/\{\pm I\}$, we obtain a map
\[ \phi \colon \pi_1(\Sigma_h)  = \langle a_1,b_1,\dots,a_h,b_h\mid
  [a_1,b_1]\dots[a_h,b_h]=1\rangle \to \UU(2^g,\bQ[\bi])/\{\pm I\}. \]
Now the commutators $[\phi(a_i),\phi(b_i)]$ are well defined in 
$\UU(2^g,\bQ[\bi])$, since changing the sign on $\phi(a_i)$ or
$\phi(b_i)$ changes the sign twice in the commutator. Since
the product of the commutators is in the kernel of $\phi$, we have
\[ [\phi(a_1),\phi(b_1)]\dots[\phi(a_h),\phi(b_h)] = \pm I \in
  \UU(2^g,\bQ[\bi]). \]

\begin{theorem}
We have
\[ [\phi(a_1),\phi(b_1)]\dots[\phi(a_h),\phi(b_h)] = 
\begin{cases} 
\phantom{-}I & \text{\rm iff }\sig(M) \equiv 0 \pmod{8}, \\
-I & \text{\rm iff }\sig(M) \equiv 4 \pmod{8}. \end{cases} \]
\end{theorem}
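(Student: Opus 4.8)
The plan is to relate the left side of the displayed equation to Meyer's cocycle $\tau$ via the projective representation $\sigma$, and then invoke the known properties of $[\tau]$. First I would observe that the assignment $X \mapsto \sigma(X)$, a priori only projective, can be lifted to an honest set-theoretic section $\bar\sigma\colon\Sp(2g,\bZ)\to\UU(2^g,\bQ[\bi])$ by choosing, for each $X$, one of the two preimages. The failure of $\bar\sigma$ to be a homomorphism is a $2$-cocycle $\epsilon\colon\Sp(2g,\bZ)^2\to\{\pm I\}\cong\bZ/2$, and its cohomology class in $H^2(\Sp(2g,\bZ),\bZ/2)$ is precisely $p^*(c)$, the class classifying the metaplectic double cover (as asserted in the Introduction and in Theorem \ref{th:main}(viii), this equals $[\tau/4]_2$, the mod $2$ reduction of $\frac14[\tau]$). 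The key algebraic identity is then: for the monodromy $\chi$, the product $\prod_i[\bar\sigma\chi(a_i),\bar\sigma\chi(b_i)]$, which is the genuine element $\pm I$ appearing in the theorem, equals $(-I)^{\langle\chi^*[\tau/4]_2,[\Sigma_h]\rangle}$. In other words, the scalar $\pm I$ produced by evaluating an honest lift of $\sigma$ on the surface-group relation computes the pairing of the metaplectic class against the fundamental class.

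The main structural step, then, is the standard fact relating the value of a central-extension cocycle on a surface-group relator to the pairing with $[\Sigma_h]$. Concretely: given a central extension $1\to\bZ/2\to\tilde G\to G\to1$ with class $u\in H^2(G,\bZ/2)$, a homomorphism $\chi\colon\pi_1(\Sigma_h)\to G$, and any set-section $s\colon G\to\tilde G$, the product $\prod_{i=1}^h[s\chi(a_i),s\chi(b_i)]\in\bZ/2$ equals $\langle\chi^*u,[\Sigma_h]\rangle$. This is because $[\Sigma_h]$ is carried by the single relator $\prod[a_i,b_i]$, and evaluating the cocycle representing $\chi^*u$ on this relator — which is exactly the commutator-product computation — gives the pairing; one unwinds the bar-resolution description of the fundamental class of a surface, or equivalently uses that $H_2(\pi_1\Sigma_h,\bZ/2)\cong\bZ/2$ is generated by the image of the relator and $u$ evaluates on it via precisely this commutator formula. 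Applying this with $G=\Sp(2g,\bZ)$, $s=\bar\sigma$, $u=[\tau/4]_2$ gives
\[ [\phi(a_1),\phi(b_1)]\dots[\phi(a_h),\phi(b_h)] = (-I)^{\langle\chi^*[\tau/4]_2,[\Sigma_h]\rangle}. \]

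Finally I would convert the exponent into a signature statement using Meyer. We have $\langle\chi^*[\tau],[\Sigma_h]\rangle=\sig(M)\in4\bZ$, hence $\langle\chi^*\frac14[\tau],[\Sigma_h]\rangle=\sig(M)/4\in\bZ$, and reducing mod $2$, $\langle\chi^*[\tau/4]_2,[\Sigma_h]\rangle=(\sig(M)/4)\bmod2\in\bZ/2$. Therefore the commutator product is $+I$ iff $\sig(M)/4$ is even, i.e.\ $\sig(M)\equiv0\pmod8$, and $-I$ iff $\sig(M)/4$ is odd, i.e.\ $\sig(M)\equiv4\pmod8$ (note $\sig(M)\equiv0\pmod4$ always holds by Meyer, so these are the only two cases). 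This is exactly the claimed dichotomy.

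The step I expect to be the real obstacle is pinning down that the cocycle $\epsilon$ of the honest lift $\bar\sigma$ of the explicit matrices for $\sigma$ genuinely represents $[\tau/4]_2$ and not some other class — equivalently, that the projective representation $\sigma$ with kernel $\fK$ really does classify the metaplectic cover and not, say, a different double cover differing by an inflated class from $H^2(\Sp(2g,2),\bZ/2)$. For $g\ge4$ this is clean because $H^2(\Sp(2g,\bZ),\bZ/2)\cong\bZ/2$ and Theorem \ref{th:main}(viii) identifies the relevant class outright; but one must still check the normalization — that $\sigma$ is nontrivial as a projective representation, i.e.\ does not lift to an actual representation of $\Sp(2g,\bZ)$ — which follows from the fact that $\tilde\fH\to\UU(2^g,\bQ[\bi])$ is faithful and $\tilde\fH$ is a nonsplit double cover of $\fH$ (Theorem \ref{th:main}(vi)), so no section of $\sigma$ can be multiplicative. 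Assembling the sign bookkeeping in the commutator identity carefully (each commutator kills the sign ambiguity, the relator's cocycle value is the sum of pairwise $\epsilon$-contributions) is routine but is where an off-by-a-factor error would hide, so I would check it against the genus-one case, where $\pi_1(\Sigma_1)=\bZ^2$, $\chi$ has abelian image, and both sides are easily computed by hand.
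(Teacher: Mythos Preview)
The paper does not supply a proof of this theorem: it is stated without a proof environment and followed only by remarks. Your proposal is a correct and essentially complete reconstruction of the argument that the paper's architecture implies. The ingredients you identify are exactly those the paper assembles elsewhere: the diagram of double covers in the Introduction shows that the projective representation $\sigma$ pulls back to the metaplectic class on $\Sp(2g,\bZ)$; Theorem~\ref{th:main}(viii) identifies that class with $[\tau/4]_2$; the standard fact that the commutator product of lifts evaluated on the surface relator computes $\langle\chi^*u,[\Sigma_h]\rangle$ for any central $\bZ/2$-extension; and Meyer's formula converts this into $\sig(M)/4\pmod 2$.

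Your caution about the identification step is well placed. For $g\ge 4$ the group $H^2(\Sp(2g,\bZ),\bZ/2)\cong\bZ/2$ makes the argument clean, as you say, and nontriviality follows from $\tilde\fH$ being a nonsplit cover. Note, however, that the paper explicitly defers the verification that the displayed matrices for $\sigma$ satisfy the relations of $\Sp(2g,\bZ)$ up to sign --- hence that $\sigma$ is a well-defined projective representation at all --- to the companion paper \cite{Benson:theta}. So the theorem as stated here rests on that external input, and your proof inherits the same dependence; this is not a gap in your argument but a feature of how the paper itself is organized. Your proposed genus-one sanity check is a sensible way to confirm the sign conventions in the commutator--cocycle identity.
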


\begin{rks}
\begin{enumerate}
\item[\rm (1)] As a method of computation, this theorem is not very
  useful, because of the large size of the matrices involved. 
  Endo \cite{Endo:1998a} provided a much more efficient and purely
  algebraic method for computing the signature, and not just modulo
  eight. On the other hand, there are consequences of the theorem that
  are not very apparent from the point of view of Endo's method.
\item[\rm (2)] 
The following is a consequence of the theta function point of view,
and will be discussed in a separate paper \cite{Benson:theta}.
Let $\Spq(2g,\bZ)$ be the theta subgroup of
  $\Sp(2g,\bZ)$.  If the image of
$\chi$ lies in $\Spq(2g,\bZ)$ then we have $\sig(M)\equiv 0\pmod{8}$.
In particular, this holds if the action of $\pi_1(\Sigma_h)$ on
$H^1(\Sigma_g,\bZ/2)$ is trivial. This proves a special case of the
Klaus--Teichner conjecture; see the introduction to 
\cite{Hambleton/Korzeniewski/Ranicki:2007a} for details.
\item[\rm (3)] Consider next the subgroup consisting
  of the matrices
$\left(\begin{smallmatrix}
    A&B\\C&D\end{smallmatrix}\right)\in\Sp(2g,\bZ)$
such that the entries of $C$ are even, and those of $\Diag(C)$ are divisible
by four. If the image of $\chi$ lies in this subgroup then again we
have $\sig(M)\equiv 0 \pmod{8}$. This will be proved in \cite{Benson:theta}.
\end{enumerate}
\end{rks}

\section{Symplectic groups and their Lie algebras}\label{se:symp}

Let $R$ be a commutative ring, and
$\Sp(2g,R)$ be the symplectic group of dimension 
$2g$ over $R$. Explicitly, this consists of matrices $X$
with entries in $R$, and satisfying $X^tJX=J$, where $J$ is the symplectic form
\[ J=\begin{pmatrix} 0 & I \\ -I & 0 \end{pmatrix} \]
and $I$ is a $g\times g$ identity matrix. Denoting by $V_R$ a free $R$-module
of rank $g$, and setting
\[ W_R=V_R^*=\Hom_R(V_R,R), \] 
the matrices $X$ act on $U_R=V_R \oplus W_R$, preserving the skew-symmetric 
bilinear form
\[ \langle\quad,\quad\rangle\colon U_R \times U_R \to R \]
given by
\[ \langle(v,w),(v',w')\rangle = w'(v)-w(v'). \]
For the action of matrices in $\Sp(2g,R)$, 
we regard $(v,w)$ as a column vector of length $2g$ with entries in $R$.
The skew-symmetric bilinear form induces an isomorphism from $U_R$ to $U_R^*$ sending
$u$ to $\langle u,\quad\rangle$. If $R=\bF_2$, we shall write $U$, $V$
and $W$ instead of $U_{\bF_2}$, $V_{\bF_2}$ and $W_{\bF_2}$.

The Lie algebra $\fsp(2g,R)$ consists of matrices $Y$ with entries in $R$,
and satisfying 
\[ JY+Y^tJ=0. \] 
Thus
\[ Y = \begin{pmatrix} \mra & \mrb \\ \mrc & -\mra^t \end{pmatrix}, \]
where $\mrb$ and $\mrc$ are symmetric. 
To say that $\mrb$ is symmetric is to say that as an element of 
\[ \Hom_R(W_R,V_R) \cong V_R \otimes_R V_R \]
it is invariant under the transposition swapping the two tensor factors. Thus
$\mrb$ is an element of the divided square $D^2(V_R)$ (which may not be identified
with the symmetric square $S^2(U_R)$ unless $2$ happens to be invertible in $R$,
which will not be the case for us). Similarly, we have $\mrc\in D^2(W_R)$ and
\[ \mra \in \Hom_R(V_R,V_R) \cong V_R \otimes_R W_R . \]
Putting this together, we see that 
\[ Y \in D^2(V_R) \oplus D^2(W_R) \oplus (V_R \otimes_R W_R) \cong D^2(U_R). \]
Thus, as a module for $\Sp(2g,R)$, we have identified the Lie algebra $\fsp(2g,R)$ with the divided square of
the natural module. More abstractly, 
if $u\in U_R$ then the symmetric tensor
$u\otimes u$ is identified 
with the endomorphism sending $x$ to $\langle u,x\rangle u$. Polarising, this identifies
$u\otimes u' + u' \otimes u$ with the endomorphism of $U_R$ sending
$x$ to $\langle u,x\rangle u' + \langle u',x\rangle u$.
We have therefore proved the following.

\begin{theorem}
For any commutative ring $R$, 
we have isomorphisms
\[ \fsp(2g,R)\cong D^2(U_R) \cong R^{g(2g+1)}. \]
The first isomorphism is an isomorphism of $\Sp(2g,R)$-modules, while the
second is an isomorphism of $R$-modules.
\end{theorem}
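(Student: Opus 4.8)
The plan is to establish the theorem by making the abstract description of the Lie algebra completely explicit, then matching dimensions. First I would take the description of $\fsp(2g,R)$ in terms of block matrices $Y=\left(\begin{smallmatrix}\mra&\mrb\\\mrc&-\mra^t\end{smallmatrix}\right)$ with $\mrb,\mrc$ symmetric, which the preceding discussion has already derived from the defining equation $JY+Y^tJ=0$. The key observation, also already sketched, is that a symmetric $g\times g$ matrix over $R$ is exactly an element of $\Hom_R(W_R,V_R)$ invariant under the transposition of tensor factors, i.e.\ an element of the divided square $D^2(V_R)$; here one must be careful to use $D^2$ rather than $S^2$, since $2$ need not be invertible in $R$. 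Symmetrically, $\mrc\in D^2(W_R)$, while the unconstrained block $\mra$ is a general element of $\Hom_R(V_R,V_R)\cong V_R\otimes_R W_R$. So as an $R$-module, $\fsp(2g,R)\cong D^2(V_R)\oplus D^2(W_R)\oplus(V_R\otimes_R W_R)$.

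Next I would identify this direct sum with $D^2(U_R)$ for $U_R=V_R\oplus W_R$. Since $D^2$ of a direct sum decomposes as $D^2(V_R)\oplus(V_R\otimes_R W_R)\oplus D^2(W_R)$ (the cross term $V_R\otimes_R W_R$ being the invariants of the transposition acting on $(V_R\otimes W_R)\oplus(W_R\otimes V_R)$, which is all of one copy), this matches term by term. To upgrade the $R$-module isomorphism to an $\Sp(2g,R)$-module isomorphism I would use the coordinate-free description already given in the text: the symmetric tensor $u\otimes u$ corresponds to the endomorphism $x\mapsto\langle u,x\rangle u$ of $U_R$, and polarising, $u\otimes u'+u'\otimes u\mapsto(x\mapsto\langle u,x\rangle u'+\langle u',x\rangle u)$. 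Since $\Sp(2g,R)$ by definition preserves the form $\langle\;,\;\rangle$, this map is manifestly equivariant: conjugating the endomorphism associated to $u\otimes u'+u'\otimes u$ by $g\in\Sp(2g,R)$ gives the endomorphism associated to $gu\otimes gu'+gu'\otimes gu$. One checks this assignment lands in, and spans, the subspace cut out by $JY+Y^tJ=0$, giving the first isomorphism.

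Finally, for the numerical isomorphism $D^2(U_R)\cong R^{g(2g+1)}$ I would simply count: $D^2$ of a free module of rank $n$ is free of rank $\binom{n+1}{2}=\binom{n}{2}+n$, so with $n=2g$ we get $\binom{2g+1}{2}=g(2g+1)$. Equivalently one reads it off the block decomposition: $\dim D^2(V_R)+\dim D^2(W_R)+\dim(V_R\otimes W_R)=\binom{g+1}{2}+\binom{g+1}{2}+g^2=g(g+1)+g^2=g(2g+1)$.

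I do not expect a serious obstacle here; this is essentially an unwinding of definitions. The one point requiring genuine care — and the reason the theorem is stated at all — is the insistence on the \emph{divided} square $D^2$ rather than the symmetric square $S^2$. Over a ring in which $2$ is not invertible (precisely the case of interest, e.g.\ $R=\bZ$ or $R=\bF_2$) these two functors differ, and the symmetric-matrix description forces $D^2$: the condition ``$\mrb$ equals its transpose'' is the condition of being a symmetric tensor (an element of $D^2$), not the condition of being a sum of diagonal-type tensors modulo the symmetric relations (which would be $S^2$). Keeping this distinction straight throughout, and being explicit that the natural pairing $D^2(V_R)\hookrightarrow V_R\otimes_R V_R$ realises symmetric matrices, is the only subtle ingredient.
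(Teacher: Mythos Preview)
Your proposal is correct and follows essentially the same approach as the paper: the paper's proof is precisely the discussion preceding the theorem statement, identifying the blocks $\mrb\in D^2(V_R)$, $\mrc\in D^2(W_R)$, $\mra\in V_R\otimes_R W_R$, assembling these into $D^2(U_R)$, and giving the coordinate-free map $u\otimes u\mapsto(x\mapsto\langle u,x\rangle u)$ for equivariance. You have added a few details the paper leaves implicit---the explicit dimension count and the verification of equivariance via $\Sp$-invariance of the form---but the argument is the same.
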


We are interested in the group $\Sp(2g,\bZ/4)$. This sits in a short exact sequence
\[ 1 \to \fsp(2g,2) \to \Sp(2g,\bZ/4) \to \Sp(2g,2) \to 1. \]
The elementary abelian  $2$-subgroup is identified with
$\fsp(2g,2)$, the symplectic Lie algebra over $\bF_2$, 
and consists of the matrices $I+2Y$ with
$Y\in\fsp(2g,2)$. These have the form
\[ \begin{pmatrix} I+2\mra & 2\mrb \\ 2\mrc & I-2\mra^t \end{pmatrix} \]
with $\mrb$ and $\mrc$ symmetric. We have a short exact sequence
\[ \xymatrix@R=6mm{0 \ar[r] & \Lambda^2(U) \ar[r] & D^2(U) \ar[r] \ar[d]^\cong & U \ar[r] & 0 \\
&&\fsp(2g,2)} \]
where $\Lambda^2(U)$ is spanned by elements of the form $u\otimes u' + u' \otimes u$.
As a submodule of $\fsp(2g,2)$, this consists of the matrices where $\Diag(\mrb)=\Diag(\mrc)=0$. 
The quotient $U$ corresponds to the diagonal entries in $\mrb$ and $\mrc$. Thus the above
short exact sequence can be thought of as a short exact sequence of groups
\[ 1\to \Gasp(2g,2,4)/\Gasp(2g,4) \to \Gasp(2g,2)/\Gasp(2g,4) \to \Gasp(2g,2)/\Gasp(2g,2,4) \to 1. \]
More generally, we have short exact sequences
\[ 1 \to \fsp(2g,2) \to \Sp(2g,\bZ/2^{n+1}) \to \Sp(2g,\bZ/2^n) \to 1 \]
and
\begin{multline*} 
1\to \Gasp(2g,2^n,2^{n+1})/\Gasp(2g,2^{n+1}) \to
\Gasp(2g,2^n)/\Gasp(2g,2^{n+1}) 
\to \Gasp(2g,2^n)/\Gasp(2g,2^n,2^{n+1}) \to 1. 
\end{multline*}

\begin{prop}
As modules over $\Sp(2g,2)$, for $g\ge 1$ and $n\ge 1$, we have
\begin{equation*}  
\Gasp(2g,2^n)/\Gasp(2g,2^n,2^{n+1})\cong U \mbox{ and }\,
\Gasp(2g,2^n,2^{n+1})/\Gasp(2g,2^{n+1}) \cong \Lambda^2(U). 
\end{equation*}
\end{prop}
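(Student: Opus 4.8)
The plan is to observe that the two asserted isomorphisms are, respectively, the submodule and the quotient module in the exact sequence $0\to\Lambda^2(U)\to D^2(U)\to U\to 0$, transported along the isomorphism $\Gasp(2g,2^n)/\Gasp(2g,2^{n+1})\cong\fsp(2g,2)\cong D^2(U)$; the only content is that this identification works uniformly in $n$, the sole arithmetic input being the inequality $2n\ge n+1$. In particular the case $n=1$ is the chain of identifications already spelled out above, and nothing new happens for larger $n$.

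First I would recall why $\Gasp(2g,2^n)/\Gasp(2g,2^{n+1})\cong\fsp(2g,2)$ as $\Sp(2g,2)$-modules. Any $X\equiv I\pmod{2^n}$ can be written $X=I+2^nZ$ with $Z$ an integral matrix, and writing $Z=\left(\begin{smallmatrix}\mra&\mrb\\\mrc&\mrd\end{smallmatrix}\right)$ in blocks, the symplectic relation $X^tJX=J$ expands to $2^n(Z^tJ+JZ)+2^{2n}Z^tJZ=0$. Reducing modulo $2^{n+1}$ kills the quadratic term because $2n\ge n+1$, so $JZ+Z^tJ\equiv 0\pmod 2$, that is, $Z\bmod 2\in\fsp(2g,2)$; conversely every element of $\fsp(2g,2)$ lifts. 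The assignment $X\mapsto Z\bmod 2$ is independent of the representative of $X$ modulo $\Gasp(2g,2^{n+1})$, turns the group law into addition since $(I+2^nZ_1)(I+2^nZ_2)\equiv I+2^n(Z_1+Z_2)\pmod{2^{n+1}}$, and intertwines conjugation by $\Sp(2g,\bZ)$ with the adjoint action on $\fsp(2g,2)$; as $\Gasp(2g,2)$ acts trivially, this is an isomorphism of $\Sp(2g,2)$-modules, and composing with the isomorphism $\fsp(2g,2)\cong D^2(U)$ of the theorem above presents $Z$ as a symmetric tensor with the same block data.

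Next I would locate the Igusa subgroup inside this picture. For $X=I+2^nZ$ the blocks are $A=I+2^n\mra$, $B=2^n\mrb$, $C=2^n\mrc$, $D=I+2^n\mrd$, so
\[AB^t=2^n\mrb^t+2^{2n}\mra\mrb^t,\qquad CD^t=2^n\mrc+2^{2n}\mrc\mrd^t.\]
Since $\Diag(\mrb^t)=\Diag(\mrb)$ and $2n\ge n+1$, we get $\Diag(AB^t)\equiv 2^n\Diag(\mrb)$ and $\Diag(CD^t)\equiv 2^n\Diag(\mrc)$ modulo $2^{n+1}$, so $X$ lies in $\Gasp(2g,2^n,2^{n+1})$ exactly when $\Diag(\mrb)\equiv\Diag(\mrc)\equiv 0\pmod 2$. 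Under the identification of the previous paragraph this is precisely the subspace of $D^2(U)$ on which the diagonal entries of the $\mrb$- and $\mrc$-blocks vanish, which is $\Lambda^2(U)$. Hence $\Gasp(2g,2^n,2^{n+1})/\Gasp(2g,2^{n+1})\cong\Lambda^2(U)$, and therefore $\Gasp(2g,2^n)/\Gasp(2g,2^n,2^{n+1})\cong D^2(U)/\Lambda^2(U)\cong U$, the last isomorphism being that of the short exact sequence $0\to\Lambda^2(U)\to D^2(U)\to U\to 0$, on whose cokernel the class of $Z$ is recorded by the pair $(\Diag(\mrb),\Diag(\mrc))$.

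The step worth stating carefully, rather than the routine matrix arithmetic, is the first one: the well-definedness, surjectivity and $\Sp(2g,2)$-equivariance of $X\mapsto Z\bmod 2$. Everything after it — matching the Igusa condition with $\Lambda^2(U)$, and the independence of $n$ — comes down to the single diagonal computation above, in which $n$ enters only through $2n\ge n+1$.
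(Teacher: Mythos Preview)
Your proof is correct and follows essentially the same approach as the paper: in fact the paper gives no separate proof for this proposition, treating it as the evident generalisation to arbitrary $n$ of the $n=1$ discussion immediately preceding it (the identification $\Gasp(2g,2)/\Gasp(2g,4)\cong\fsp(2g,2)\cong D^2(U)$ together with the description of $\Lambda^2(U)$ as the matrices with $\Diag(\mrb)=\Diag(\mrc)=0$). You have simply made explicit the matrix arithmetic and isolated the inequality $2n\ge n+1$ as the reason the argument is uniform in $n$, which is a welcome clarification.
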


Now the symplectic form on $U$ gives us a map 
$\Lambda^2(U) \to \bF_2$, sending $u\otimes u' + u' \otimes u$ to $\langle u, u'\rangle$.
We write $Y$ for the kernel of this map, and we write
$Z$ for $D^2(U)/Y$, an $\bF_2$-vector space of dimension $2g+1$. 
Putting these together, we have the following diagram of
modules.
\[ \xymatrix{&0\ar[d]&0\ar[d] \\ &Y\ar[d] \ar@{=}[r] & Y \ar[d] \\
0 \ar[r] & \Lambda^2(U) \ar[r] \ar[d] & D^2(U)\ar[d]
\ar[r]&U\ar@{=}[d]\ar[r] &0\\
0 \ar[r] & \bF_2 \ar[d] \ar[r] & Z \ar[d] \ar[r] & U\ar[r]&0\\
&0&0} \]
We claim that the symplectic form on $U$ lifts to a nondegenerate orthogonal form on $Z$, 
invariant under $\Sp(2g,2)$. The quadratic form $Z \to \bF_2$ is given by
\[ \fq(u \otimes u)=0,\quad \fq(u\otimes u' + u' \otimes u) = \langle u,u'\rangle, \]
and the associated symmetric bilinear form is
\begin{gather*} 
\fb(u\otimes u,u'\otimes u')=\langle u,u'\rangle,\\ 
\fb(u\otimes u' + u'\otimes u,u''\otimes u'')=0,\\
\fb(u\otimes u' + u' \otimes u, u'' \otimes u''' + u''' \otimes u'')=0. 
\end{gather*}
A priori, these are a quadratic form and associated bilinear 
form on $D^2(U)$. But they
clearly vanish identically on $Y$, and define a 
nondegenerate but singular quadratic form and associated 
bilinear form on $Z$. These are invariant under $\Sp(2g,2)$, which is therefore the
orthogonal group on $Z\cong {\bF_2}^{2g+1}$, displaying the isomorphism
\[ \Sp(2g,2)\cong\Orth(2g+1,2). \]

\begin{rk}\label{rk:qbsp}
Translating back from $D^2(U)$ to $\fsp(2g,2)$, the
quadratic and bilinear form are given as follows:
\begin{align*} 
\fq\begin{pmatrix} \mra&\mrb \\ \mrc&-\mra^t\end{pmatrix}
&=\Tr(\mra) +\langle\Diag(\mrb),\Diag(\mrc)\rangle,
\\
\fb\left(\begin{pmatrix} \mra&\mrb \\ \mrc&-\mra^t\end{pmatrix},
\begin{pmatrix} \mra'&\mrb' \\ \mrc'&-\mra'^t\end{pmatrix}\right)
&=\langle\Diag(\mrb),\Diag(\mrc')\rangle+\langle\Diag(\mrb'),\Diag(\mrc)\rangle.
\end{align*}
Here, the pointy brackets denote the standard inner product on
${\bF_2}^g$ given by multiplying corresponding coordinates and summing.

The normal subgroup $\fK$ described in Section \ref{se:K} 
is the inverse image of 
\[ Y\leq \fsp(2g,2)\leq \Sp(2g,\bZ/4) \] 
under the quotient map $\Sp(2g,\bZ)\to \Sp(2g,\bZ/4)$. Thus there is a
short exact sequence
\[ 1 \to Z \to \fH \to \Sp(2g,2) \to 1 \]
and the subgroup $Z\cong (\bZ/2)^{2g+1}$ may be viewed as 
the orthogonal module ${\bF_2}^{2g+1}$ for $\Sp(2g,2)$ via conjugation.
\end{rk}

\begin{rk}
The submodule structure of the $\bF_2\Sp(2g,2)$-modules 
$\Lambda^2(U)$ of dimension $g(2g-1)$ and 
$D^2(U)\cong \fsp(2g,2)$ of dimension $g(2g+1)$ can be described explicitly as
follows (see also Hiss \cite{Hiss:1984a}).
There is a map $\Lambda^2(U) \to \bF_2$
corresponding to the symplectic form, given by
\[ u\otimes u' + u' \otimes u \mapsto \langle u,u'\rangle. \]
There is a dual map
$\bF_2\to \Lambda^2(U)$ coming from the fact that the representation
$\Lambda^2(U)$ is self-dual. In terms of the natural bases $v_1,\dots,v_g$
of $V$ and $w_1,\dots,w_g$ of $W$, this is given by
\[ 1 \mapsto \sum_i( v_i\otimes w_i + w_i \otimes v_i). \]

If $g=1$ then $\Lambda^2(U)\cong \bF_2$ is one dimensional, $Y=0$, and 
$Z=D^2(U)$ decomposes as a direct sum $\bF_2 \oplus U$.

If $g\ge 2$ is even then the composite $\bF_2\to \Lambda^2(U)\to \bF_2$ is zero,
and the quotient of the kernel by the image
is a simple module $S$ of dimension $g(2g-1)-2$. Thus $\Lambda^2(U)$ is uniserial
(i.e., it has a unique composition series) with composition factors $\bF_2$, $S$, $\bF_2$.

If $g\ge 3$ is odd, then the composite is nonzero, and 
$\Lambda^2(U)$ decomposes as a direct sum of a trivial module $\bF_2$
and a simple module $S$ of dimension $g(2g-1)-1$. 

In both cases with $g\ge 2$, the unique maximal submodule of $D^2(U)$ is $\Lambda^2(U)$.
We can therefore draw diagrams for the structure of
$D^2(U)\cong \fsp(2g,2)$ as follows:
\[ g=1\colon\quad \bF_2\oplus U\qquad\qquad
g\ge 2\textrm{\ even\ }\colon \quad 
\vcenter{\xymatrix@=3mm{U\ar@{-}[d] \\ \bF_2 \ar@{-}[d] \\ S\ar@{-}[d] \\ \bF_2}} 
\qquad\qquad
g\ge 3\textrm{\ odd\ }\colon \quad 
\vcenter{\xymatrix@=3mm{&U \ar@{-}[dl] \ar@{-}[dr] \\ \bF_2 & & S}}
\]
For $g\ge 2$, the quotient $Z$ of $D^2(U)$ has structure
\[ \vcenter{\xymatrix@=3mm{U\ar@{-}[d] \\ \bF_2}} \]
and this is the orthogonal module for $\Sp(2g,2)\cong \Orth(2g+1,2)$.
The submodule $Y$ is $S$ for $g\ge 3$ odd, it is a nonsplit extension
\[ 0 \to \bF_2 \to Y \to S \to 0 \]
for $g\ge 2$ even, and $Y=0$ for $g=1$.
\end{rk}

\begin{lemma}\label{le:H0Y}
\begin{enumerate}
\item[\rm (i)]
$H_0(\Sp(2g,2),Y)=0$ and $H_0(\Sp(2g,2),U)=0$ for $g\ge 1$.
\item[\rm (ii)]
$H_0(\Sp(2g,2),Z)=0$ and
$H_0(\Sp(2g,2),\fsp(2g,2))=0$ for $g\ge 2$.
\end{enumerate}
\end{lemma}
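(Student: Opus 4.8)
The plan is to show that these coinvariant groups vanish by exhibiting, in each module, an explicit element whose orbit under $\Sp(2g,2)$ generates the whole module together with the zero submodule of coinvariants, i.e.\ by showing the augmentation-type map is zero. Concretely, $H_0(\Sp(2g,2),M)=M_{\Sp(2g,2)}=M/\langle gm-m\rangle$, so it suffices to prove that the submodule generated by all differences $gm-m$ is all of $M$. Since $U$, $\Lambda^2(U)$, $D^2(U)$ and their subquotients $Y$, $Z$ are cyclic as $\bF_2\Sp(2g,2)$-modules (each is generated by a single ``obvious'' vector: $v_1$ for $U$, $v_1\otimes v_2+v_2\otimes v_1$ for $\Lambda^2(U)$, $v_1\otimes v_1$ for $D^2(U)$, and their images in the quotients), it is enough to show that this one generator lies in the submodule spanned by the $gm-m$. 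That reduces everything to finding, for the chosen generator $m_0$, a group element $g$ with $gm_0 - m_0$ again a generator, or more simply a short combination of such differences equal to $m_0$.

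First I would treat $U$. The natural module $U=V\oplus W$ over $\bF_2$ for $g\ge1$ has no nonzero trivial quotient: pick $m_0=v_1$; the element of $\Sp(2g,2)$ sending $v_1\mapsto v_1+w_1$ (a transvection) and fixing the other basis vectors gives $gm_0-m_0=w_1$, and symmetrically one produces every basis vector as such a difference, so $U_{\Sp(2g,2)}=0$. (Equivalently, $U$ is irreducible for $g\ge1$ over $\bF_2$ and nontrivial, hence has trivial coinvariants.) This gives the first half of (i). For $Y$: when $g\ge3$ is odd, $Y\cong S$ is simple and nontrivial, so $H_0=0$ immediately; when $g\ge2$ is even, $Y$ is the nonsplit extension $0\to\bF_2\to Y\to S\to0$, and since $S$ is simple nontrivial, $H_0(\Sp(2g,2),S)=0$, so the five-term/long-exact sequence in homology forces $H_0(\Sp(2g,2),Y)$ to be a quotient of $H_0(\Sp(2g,2),\bF_2)=\bF_2$; it is nonzero only if the extension splits at the level of coinvariants, which it does not because the extension $Y$ is nonsplit and $H^1(\Sp(2g,2),S)$ pairs against this — so I would instead argue directly that the generator $v_1\otimes v_2+v_2\otimes v_1$ of $Y$ is a difference $gm-m$ for a suitable transvection, giving $Y_{\Sp(2g,2)}=0$. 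When $g=1$, $Y=0$ and there is nothing to prove. That completes (i).

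For (ii) with $g\ge2$: $Z$ fits in $0\to\bF_2\to Z\to U\to0$ and $\fsp(2g,2)\cong D^2(U)$ fits in $0\to\Lambda^2(U)\to D^2(U)\to U\to0$. In both cases the quotient is $U$, with $H_0(\Sp(2g,2),U)=0$ from (i), so right-exactness of coinvariants gives a surjection $H_0(\Sp(2g,2),\bF_2)\twoheadrightarrow H_0(\Sp(2g,2),Z)$ and $H_0(\Sp(2g,2),\Lambda^2(U))\twoheadrightarrow H_0(\Sp(2g,2),D^2(U))$; the latter composed with the surjection from $H_0$ of $Y$ or $\bF_2$ reduces everything to showing that the image of the generator $v_1\otimes v_1\in D^2(U)$ (resp.\ its image in $Z$) dies in coinvariants. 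Here is where I would do the one genuine computation: apply the transvection $t$ sending $v_1\mapsto v_1+w_1$ and fixing everything else; then $t(v_1\otimes v_1)-v_1\otimes v_1 = v_1\otimes w_1 + w_1\otimes v_1 + w_1\otimes w_1$, and combining this with differences lying in $\Lambda^2(U)$ (which one checks are already in the span of $gm-m$, using that $\Lambda^2(U)$ is generated by $v_1\otimes w_1+w_1\otimes v_1$ and this too is a transvection-difference) shows $w_1\otimes w_1$, hence every $v_i\otimes v_i$, lies in the coinvariant-killing submodule; since $\fsp(2g,2)=D^2(U)$ is generated by any such $v_i\otimes v_i$ (it lies outside the unique maximal submodule $\Lambda^2(U)$), we conclude $H_0(\Sp(2g,2),\fsp(2g,2))=0$, and the corresponding computation in $Z$ gives $H_0(\Sp(2g,2),Z)=0$.

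The main obstacle is the even-$g$ subtlety for $Y$ and the bookkeeping needed to be sure that the transvection-differences really do generate the relevant maximal submodules $\Lambda^2(U)$ and $Y$; the cleanest route is probably to observe that $\Sp(2g,2)$ is generated by transvections, so the submodule $\langle gm-m : g, m\rangle$ is already generated by $\{(t-1)m : t \text{ a transvection}, m\in M\}$, and then to note that $(t-1)M$ for a single well-chosen transvection already spills out of every maximal submodule. Once that is in hand, all five statements follow from the module diagrams in the preceding remark without any appeal to higher cohomology.
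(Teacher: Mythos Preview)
Your approach is viable but considerably more laborious than the paper's, and it leans on the same module-structure facts you would need for the shorter argument anyway. The paper's proof is a single observation: $H_0(G,M)$ is the maximal quotient of $M$ with trivial $G$-action, so it vanishes precisely when $M$ admits no nonzero homomorphism to the trivial module $\bF_2$. The module diagrams in the preceding remark show that the head of each of $U$, $Y$, $Z$, $D^2(U)$ (for the stated ranges of $g$) is either $U$ or $S$, never $\bF_2$; hence $H_0=0$ in every case. No transvection computations are needed.

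Your explicit route---showing a module generator lies in the span of the $(g-1)m$---does work, and your key computation $(t_{w_1}-1)(v_1\otimes v_1)=v_1\otimes w_1+w_1\otimes v_1+w_1\otimes w_1$ already suffices for $D^2(U)$: this element lies outside $\Lambda^2(U)$ (because of the $w_1\otimes w_1$ term), hence generates $D^2(U)$, so the submodule $I\cdot D^2(U)$ is everything and coinvariants vanish. You then overcomplicate matters by trying to isolate $w_1\otimes w_1$ separately. Note also a slip: you claim $v_1\otimes v_2+v_2\otimes v_1$ generates $\Lambda^2(U)$, but since $\langle v_1,v_2\rangle=0$ this element lies in $Y\subsetneq\Lambda^2(U)$ and cannot generate (indeed $H_0(\Sp(2g,2),\Lambda^2(U))\cong\bF_2$, so $\Lambda^2(U)$ is not killed by coinvariants). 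This doesn't affect the four modules in the lemma, but it shows the bookkeeping is delicate. The muddled passage about nonsplitness of $Y$ and ``$H^1(\Sp(2g,2),S)$ pairing'' should simply be replaced by the observation that $Y$ has head $S$ (uniserial for even $g$), so no trivial quotient exists---which is again the paper's argument.

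In short: both approaches ultimately rest on knowing the unique maximal submodules, and once you have that, the paper's one-line deduction is preferable to chasing explicit differences.
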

\begin{proof}
This follows immediately from the structure of $Y$, $U$, $Z$, and 
$\fsp(2g,2)$ as $\Sp(2g,2)$-modules
given in the above remark, since these modules admit no nontrivial homomorphisms
to $\bF_2$ with trivial action.
\end{proof}

\section{Computations in degree two homology and cohomology}\label{se:coho}

\begin{lemma}\label{le:H2Sp2gF2}
\begin{enumerate}
\item[\rm (i)]
$H_2(\Sp(2g,2))=0$ for $g\ge 4$ and
$H_2(\Sp(6,2))\cong\bZ/2$.
\item[\rm (ii)]
$H^2(\Sp(2g,2),\bZ/2)=0$ for $g\ge 4$ and 
$H^2(\Sp(6,2),\bZ/2)\cong\bZ/2$.
\end{enumerate}
\end{lemma}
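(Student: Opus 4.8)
The plan is to deduce both statements from the known Schur multiplier of $\Sp(2g,2)$ together with the universal coefficient theorem. For part (i), the key input is the classical computation of $H_2$ of the finite symplectic groups over $\bF_2$. The groups $\Sp(2g,2)$ are, up to the exceptional isomorphisms in small rank, the finite simple groups of type $C_g$ over $\bF_2$, and by the work of Steinberg and the tables of Griess (and the Atlas \cite{Atlas}) the Schur multiplier of $\Sp(2g,2)$ is trivial for $g\ge 4$, while $\Sp(6,2)$ has an exceptional Schur multiplier $\bZ/2$ (the source of the extra double cover referred to in Lemma \ref{le:H2} and in the introduction). Since $\Sp(2g,2)$ is perfect for $g\ge 3$, its abelianisation vanishes, so $H_1(\Sp(2g,2))=0$ in this range. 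I would simply cite these facts: $H_2(\Sp(2g,2))=0$ for $g\ge 4$ and $H_2(\Sp(6,2))\cong\bZ/2$.

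For part (ii), I would apply the universal coefficient theorem in cohomology,
\[ 0 \to \Ext^1_{\bZ}(H_1(\Sp(2g,2)),\bZ/2) \to H^2(\Sp(2g,2),\bZ/2) \to \Hom(H_2(\Sp(2g,2)),\bZ/2) \to 0. \]
For $g\ge 3$ perfectness gives $H_1(\Sp(2g,2))=0$, so the $\Ext$ term vanishes and $H^2(\Sp(2g,2),\bZ/2)\cong\Hom(H_2(\Sp(2g,2)),\bZ/2)$. Feeding in part (i): for $g\ge 4$ this is $\Hom(0,\bZ/2)=0$, and for $g=3$ it is $\Hom(\bZ/2,\bZ/2)\cong\bZ/2$. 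This gives exactly the stated values.

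The main obstacle is really just locating and correctly citing the Schur multiplier computation; there is no hard new argument here, only bookkeeping. One should be slightly careful that the exceptional behaviour at $g=3$ is genuinely due to $H_2$ and not to an $H_1$ contribution — but $\Sp(6,2)$ is simple, hence perfect, so $H_1$ vanishes and the exceptional $\bZ/2$ in $H^2(\cdot,\bZ/2)$ is forced to come from the exceptional double cover. (For completeness one could also note the small-rank coincidences $\Sp(2,2)\cong\Sym_3$ and $\Sp(4,2)\cong\Sym_6$, whose multipliers are $0$ and $\bZ/2$ respectively, but these lie outside the range $g\ge 3$ that part (ii) needs and would be relegated to the appendix treating small $g$.)
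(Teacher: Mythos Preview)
Your proposal is correct and matches the paper's proof essentially verbatim: part (i) is cited (the paper uses Steinberg~\cite{Steinberg:1981a} alone, while you add Griess and the Atlas), and part (ii) is deduced from the universal coefficient theorem together with perfectness of $\Sp(2g,2)$ for $g\ge 3$. There is nothing to add.
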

\begin{proof}
(i) This is computed in the paper of Steinberg \cite{Steinberg:1981a}.

(ii) This follows from the universal coefficient theorem, because
$\Sp(2g,2)$ is perfect for $g\ge 3$.
\end{proof}

\begin{lemma}\label{le:H0SpH1Gamma}
$H_0(\Sp(2g,\bZ/{2^n}),
H_1(\Gasp(2g,2^n)))=0$ for $n\ge 1$ and $g \ge 2$.
\end{lemma}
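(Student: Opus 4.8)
The plan is to rewrite the statement in terms of commutator subgroups and then prove it using the module theory of Section~\ref{se:symp} together with the congruence subgroup property. Abbreviate $\Sp(2g,\bZ)$ to $G$ and $\Gasp(2g,2^n)$ to $\Gasp_n$. For any extension $1\to N\to G\to Q\to 1$ the coinvariants of the abelianisation satisfy $H_0(Q,H_1(N))\cong N/[G,N]$, where $[G,N]$ is the normal subgroup of $G$ generated by the commutators $[x,\nu]$, $x\in G$, $\nu\in N$. Applying this to $1\to\Gasp_n\to G\to\Sp(2g,\bZ/2^n)\to 1$, the lemma becomes the equality $\Gasp_n=[G,\Gasp_n]$, and it is this equality that the argument will establish.

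The first step is the ``one level'' version $[G,\Gasp_n]\cdot\Gasp_{n+1}=\Gasp_n$. If $x\in G$ and $\nu=I+2^nY\in\Gasp_n$, a short computation (using $2n\ge n+1$) shows that the class of $[x,\nu]$ in $\Gasp_n/\Gasp_{n+1}\cong\fsp(2g,2)$ equals $\bar x\cdot\bar Y-\bar Y$ for the conjugation action of $\Sp(2g,2)$ on $\fsp(2g,2)$, where $\bar x\in\Sp(2g,2)$ and $\bar Y\in\fsp(2g,2)$ are the reductions. Hence the image of $[G,\Gasp_n]$ in $\fsp(2g,2)$ is the augmentation submodule, whose cokernel $H_0(\Sp(2g,2),\fsp(2g,2))$ vanishes for $g\ge2$ by Lemma~\ref{le:H0Y}(ii); so that image is all of $\fsp(2g,2)$, i.e. $[G,\Gasp_n]\cdot\Gasp_{n+1}=\Gasp_n$. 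Iterating upwards, $[G,\Gasp_n]\cdot\Gasp_m=\Gasp_n$ for every $m\ge n$. Thus it suffices to show $\Gasp_a\subseteq[G,\Gasp_n]$ for some $a$, for then $\Gasp_n=[G,\Gasp_n]\cdot\Gasp_a=[G,\Gasp_n]$.

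For the second step: since $g\ge2$, the group $G$ has Kazhdan's property $(T)$, so the finite-index subgroup $\Gasp_n$ has finite abelianisation; consequently $H_0(\Sp(2g,\bZ/2^n),H_1(\Gasp_n))=\Gasp_n/[G,\Gasp_n]$ is finite and $[G,\Gasp_n]$ has finite index in $G$. By the congruence subgroup property for $\Sp(2g,\bZ)$, $g\ge2$ (Bass--Milnor--Serre), $[G,\Gasp_n]$ contains some principal congruence subgroup $\Gasp(2g,N)$; writing $N=2^aN'$ with $N'$ odd, the inclusions $\Gasp(2g,N)\subseteq[G,\Gasp_n]\subseteq\Gasp_n$ force $a\ge n$, so $\Gasp(2g,2^aN')\subseteq[G,\Gasp_n]$. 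Now the image of $\Gasp_a$ in $G/[G,\Gasp_n]$ lies in $\ker\bigl(G/[G,\Gasp_n]\to\Sp(2g,\bZ/2^n)\bigr)=\Gasp_n/[G,\Gasp_n]$, which is abelian; on the other hand it is a quotient of $\Gasp_a/\Gasp(2g,2^aN')$, which by surjectivity of reduction and the Chinese remainder theorem is isomorphic to $\Sp(2g,\bZ/N')$, a perfect group (a direct product of groups $\Sp(2g,\bZ/p^k)$ with $p$ odd, each perfect for $g\ge2$). An abelian quotient of a perfect group is trivial, so $\Gasp_a\subseteq[G,\Gasp_n]$, completing the proof.

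The first step is routine once Lemma~\ref{le:H0Y} and the description of $\Gasp_n/\Gasp_{n+1}$ from Section~\ref{se:symp} are in hand. I expect the main obstacle to be the second step, which uses the two deep external inputs (the congruence subgroup property of Bass--Milnor--Serre, and property $(T)$) and requires juggling the $2$-primary congruence subgroups against the odd ones; the decisive point there is that $\Sp(2g,\bZ/N')$ is perfect for odd $N'$ and $g\ge2$, which itself rests on $H_0(\Sp(2g,\bF_p),\fsp(2g,\bF_p))=0$ for $p$ odd.
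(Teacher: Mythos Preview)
Your proof is correct, but the paper takes a different and more direct route. The paper quotes Sato's computation (Proposition~10.1 of \cite{Sato:2010a}) that the commutator subgroup of $\Gasp(2g,2^n)$ is exactly $\Gasp(2g,2^{2n},2^{2n+1})$; this identifies $H_1(\Gasp(2g,2^n))$ explicitly as an extension of $\fsp(2g,\bZ/2^n)$ by $U$, and then Lemma~\ref{le:H0Y} (both parts) together with right exactness of $H_0$ finishes the argument in a few lines. Your approach instead avoids Sato's result entirely: you use only the one-level computation $H_0(\Sp(2g,2),\fsp(2g,2))=0$ from Lemma~\ref{le:H0Y}(ii) to fill each layer $\Gasp_m/\Gasp_{m+1}$, and then close the remaining gap with property~(T), the congruence subgroup property, and perfectness of $\Sp(2g,\bZ/N')$ for odd $N'$. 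The paper's approach is shorter and yields the explicit module structure of $H_1(\Gasp_n)$ as a byproduct, at the cost of importing Sato's nontrivial computation; your approach trades that single external input for several heavier ones (Kazhdan, Bass--Milnor--Serre, perfectness at odd primes) but is more structural and would adapt to situations where the commutator subgroup of the congruence kernel is not known explicitly.
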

\begin{proof}
Proposition 10.1 of Sato \cite{Sato:2010a} computes $H_1(\Gasp(2g,N))$, the abelianisation of $\Gasp(2g,N)$, finding that the derived subgroup is $\Gasp(2g,N^2)$ if $N$ is odd,
and $\Gasp(2g,N^2,2N^2)$ if $N$ is even.

Taking $N=2^n$, this gives
\[ H_1(\Gasp(2g,2^n))\cong \Gasp(2g,2^n)/\Gasp(2g,2^{2n},2^{2n+1}). \]
As modules over $\Sp(2g,\bZ/{2^n})$ we have
\begin{gather*}
\Gasp(2g,2^n)/\Gasp(2g,2^{2n})\cong \fsp(2g,\bZ/{2^n}) \,\mbox{ and }\,
\Gasp(2g,2^{2n})/\Gasp(2g,2^{2n},2^{2n+1})\cong U
\end{gather*}
(see Section~\ref{se:symp}). This gives us a short exact sequence
\begin{equation}\label{eq:H0H1}
0 \to U \to H_1(\Gasp(2g,2^n)) \to \fsp(2g,\bZ/{2^n}) \to 0.
\end{equation}
We also have short exact sequences
\begin{equation*}
0 \to \fsp(2g,2) \to \fsp(2g,\bZ/{2^n}) \to \fsp(2g,\bZ/{2^{n-1}}) \to 0.
\end{equation*}
By Lemma \ref{le:H0Y}\,(ii), for $g\ge 2$, we have
\[ H_0(\Sp(2g,\bZ/{2^n}),\fsp(2g,2))\cong H_0(\Sp(2g,2),\fsp(2g,2))=0, \]
and so by induction on $n$ and right exactness of $H_0$, we have
\[ H_0(\Sp(2g,\bZ/{2^n}),\fsp(2g,\bZ/{2^n}))=0. \]
Finally, by Lemma \ref{le:H0Y}\,(i) we have
\begin{equation*}
H_0(\Sp(2g,\bZ/{2^n}),U)\cong H_0(\Sp(2g,2),U)=0.
\end{equation*}
Therefore, using right exactness of $H_0$ on the sequence
\eqref{eq:H0H1}, the lemma is proved.
\end{proof}

\begin{prop}\label{pr:surj1}
For $n\ge 1$ and $g\ge 2$,
\begin{enumerate} 
\item[\rm (i)]
the map $H_2(\Sp(2g,\bZ)) \to H_2(\Sp(2g,\bZ/2^n))$ is surjective, and
\item[\rm (ii)]
the map $H_2(\Sp(2g,\bZ/2^{n+1})) \to H_2(\Sp(2g,\bZ/2^n))$ is surjective.
\end{enumerate}
\end{prop}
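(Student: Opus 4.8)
The plan is to use the five-term exact sequence in homology (inflation–restriction) coming from the relevant group extensions, and to feed in the vanishing results of Section \ref{se:symp}, particularly Lemma \ref{le:H0Y} and Lemma \ref{le:H0SpH1Gamma}. For part (ii), apply the five-term sequence to the extension $1 \to \fsp(2g,2) \to \Sp(2g,\bZ/2^{n+1}) \to \Sp(2g,\bZ/2^n) \to 1$ established in Section \ref{se:symp}. The tail of this sequence reads
\[ H_2(\Sp(2g,\bZ/2^{n+1})) \to H_2(\Sp(2g,\bZ/2^n)) \to H_0(\Sp(2g,\bZ/2^n), \fsp(2g,2)) \to H_1(\Sp(2g,\bZ/2^{n+1})) \to H_1(\Sp(2g,\bZ/2^n)) \to 0. \]
The key point is that the coinvariants $H_0(\Sp(2g,\bZ/2^n),\fsp(2g,2))$ vanish for $g\ge 2$: the action factors through $\Sp(2g,2)$, and $H_0(\Sp(2g,2),\fsp(2g,2))=0$ by Lemma \ref{le:H0Y}(ii). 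Hence the connecting map $H_2(\Sp(2g,\bZ/2^n)) \to H_0(\Sp(2g,\bZ/2^n),\fsp(2g,2))$ has trivial target, so the preceding map is surjective, which is exactly (ii).

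For part (i), I would run the same argument with the extension $1 \to \Gamma(2g,2^n) \to \Sp(2g,\bZ) \to \Sp(2g,\bZ/2^n) \to 1$. Wait — more precisely one wants the extension whose kernel is the principal congruence subgroup; the relevant five-term sequence is
\[ H_2(\Sp(2g,\bZ)) \to H_2(\Sp(2g,\bZ/2^n)) \to H_0(\Sp(2g,\bZ/2^n), H_1(\Gamma(2g,2^n))) \to H_1(\Sp(2g,\bZ)) \to \cdots. \]
By Lemma \ref{le:H0SpH1Gamma}, the coinvariants $H_0(\Sp(2g,\bZ/2^n), H_1(\Gamma(2g,2^n)))$ vanish for $n\ge 1$ and $g\ge 2$. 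Therefore again the connecting map out of $H_2(\Sp(2g,\bZ/2^n))$ is zero, forcing $H_2(\Sp(2g,\bZ)) \to H_2(\Sp(2g,\bZ/2^n))$ to be surjective, which is (i). (Alternatively, (i) for general $n$ follows by composing the $n=1$ case with finitely many applications of (ii), but the direct five-term argument via Lemma \ref{le:H0SpH1Gamma} is cleaner and handles all $n$ uniformly.)

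The main obstacle — already dispatched by the lemmas cited — is establishing that the relevant modules of coinvariants vanish; everything else is a formal consequence of exactness of the five-term sequence. So in the write-up the only real content is quoting Lemma \ref{le:H0Y}(ii) (for (ii)) and Lemma \ref{le:H0SpH1Gamma} (for (i)), observing in each case that the action of the quotient group on the kernel's abelianisation factors through $\Sp(2g,2)$ or $\Sp(2g,\bZ/2^n)$ as appropriate, and reading off surjectivity from the position of the vanishing coinvariant term in the five-term sequence. One should take mild care that $\Gamma(2g,2^n)$ (rather than some Igusa-type subgroup) is the correct kernel for the statement in (i), and that Sato's computation of its abelianisation, invoked in Lemma \ref{le:H0SpH1Gamma}, is what makes the coinvariants vanish.
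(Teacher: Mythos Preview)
Your proposal is correct and matches the paper's own proof essentially line for line: both parts are handled via the five-term homology sequence for the relevant extension, with surjectivity deduced from the vanishing of the coinvariant term using Lemma~\ref{le:H0SpH1Gamma} for (i) and Lemma~\ref{le:H0Y}(ii) for (ii). The only cosmetic difference is that the paper treats (i) before (ii).
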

\begin{proof}
(i)
The short exact sequence
\[ 1 \to \Gasp(2g,2^n) \to \Sp(2g,\bZ) \to \Sp(2g,\bZ/2^n) \to 1 \]
gives rise to a five-term sequence in homology
\begin{multline*}
H_2(\Sp(2g,\bZ)) \to H_2(\Sp(2g,\bZ/{2^n})) \to H_0\left(\Sp(2g,\bZ/2^n),H_1(\Gasp(2g,2^n))\right) \\
\to H_1(\Sp(2g,\bZ)) \to H_1(\Sp(2g,\bZ/{2^n})) \to 0.
\end{multline*}
The proposition therefore follows immediately from Lemma \ref{le:H0SpH1Gamma}.

(ii) This is similar, observing that $H_1(\Gamma(2g,2^n)/\Gamma(2g,2^{n+1}))
\cong \fsp(2g,2)$, so that by Lemma \ref{le:H0Y}\,(ii) we have 
\begin{equation*}
H_0(\Sp(2g,\bZ/2^n),H_1(\Gamma(2g,2^n)/\Gamma(2g,2^{n+1})))=0.
\qedhere
\end{equation*}
\end{proof}

\begin{cor}
For $n\ge 1$ and $g\ge 3$, the map 
\[ H^2(\Sp(2g,\bZ/{2^n}), A) \to H^2(\Sp(2g,\bZ), A) \] 
is injective for any abelian group of coefficients $A$ with trivial action.
\end{cor}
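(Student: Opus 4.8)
The plan is to reduce the statement to a purely homological fact about $H_2$ via the universal coefficient theorem, and then quote Proposition~\ref{pr:surj1}(i). First I would observe that, for $g\ge 3$, both groups in sight are perfect. The group $\Sp(2g,\bZ)$ is perfect for $g\ge 3$ (classical; this is also implicit in the introduction, where $\tfrac14[\tau]$ is said to classify the \emph{universal} central extension of $\Sp(2g,\bZ)$). For $\Sp(2g,\bZ/2^n)$, the five-term exact sequence attached to
\[ 1 \to \Gasp(2g,2^n) \to \Sp(2g,\bZ) \to \Sp(2g,\bZ/2^n) \to 1 \]
produces a surjection $H_1(\Sp(2g,\bZ)) \to H_1(\Sp(2g,\bZ/2^n))$ whose kernel is the image of $H_0(\Sp(2g,\bZ/2^n),H_1(\Gasp(2g,2^n)))$; this $H_0$ term vanishes for $g\ge 2$ by Lemma~\ref{le:H0SpH1Gamma}, so $H_1(\Sp(2g,\bZ/2^n))\cong H_1(\Sp(2g,\bZ))=0$.

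Next I would invoke the universal coefficient theorem for group cohomology with trivial coefficient group $A$: for any group $G$ there is a natural short exact sequence
\[ 0 \to \Ext^1_\bZ(H_1(G),A) \to H^2(G,A) \to \Hom_\bZ(H_2(G),A) \to 0. \]
Applying this to $G=\Sp(2g,\bZ)$ and to $G=\Sp(2g,\bZ/2^n)$ and using that the relevant $H_1$'s vanish for $g\ge 3$, the $\Ext$ terms disappear and we obtain natural isomorphisms $H^2(\Sp(2g,\bZ),A)\cong\Hom_\bZ(H_2(\Sp(2g,\bZ)),A)$ and $H^2(\Sp(2g,\bZ/2^n),A)\cong\Hom_\bZ(H_2(\Sp(2g,\bZ/2^n)),A)$. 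By naturality of the universal coefficient sequence, under these identifications the inflation map $H^2(\Sp(2g,\bZ/2^n),A)\to H^2(\Sp(2g,\bZ),A)$ is precisely $\Hom_\bZ(-,A)$ applied to the homomorphism $H_2(\Sp(2g,\bZ))\to H_2(\Sp(2g,\bZ/2^n))$ induced by reduction modulo $2^n$.

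That homomorphism on $H_2$ is surjective (for $g\ge 2$, hence certainly for $g\ge 3$) by Proposition~\ref{pr:surj1}(i), and the contravariant functor $\Hom_\bZ(-,A)$ is left exact, so it carries surjections to injections. Therefore the inflation map $H^2(\Sp(2g,\bZ/2^n),A)\to H^2(\Sp(2g,\bZ),A)$ is injective, as claimed. I do not expect a serious obstacle: the only two points needing care are the perfectness of $\Sp(2g,\bZ/2^n)$ (dealt with above via the five-term sequence and Lemma~\ref{le:H0SpH1Gamma}) and the naturality of the universal coefficient sequence, which is what legitimises identifying inflation with the $A$-dual of the $H_2$-map; both are standard.
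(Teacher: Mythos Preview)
Your argument is correct and is precisely the paper's approach spelled out in detail: Proposition~\ref{pr:surj1}(i) plus the universal coefficient theorem, using perfectness of both groups for $g\ge 3$ to kill the $\Ext$ terms. One minor simplification: you do not need Lemma~\ref{le:H0SpH1Gamma} to see that $\Sp(2g,\bZ/2^n)$ is perfect, since it is a quotient of the perfect group $\Sp(2g,\bZ)$ (equivalently, the surjectivity of $H_1(\Sp(2g,\bZ))\to H_1(\Sp(2g,\bZ/2^n))$ alone already forces the target to vanish).
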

\begin{proof}
This follows directly from Proposition \ref{pr:surj1} together with 
the universal coefficient theorem for 
cohomology, as the groups $\Sp(2g,\bZ)$ and $\Sp(2g,\bZ/{2^n})$ 
are perfect for $g\ge 3$.
\end{proof}

\begin{prop}\label{pr:surj2}
For $g\ge 2$, the maps $H_2(\Sp(2g,\bZ/4)) \to H_2(\fH)\to
H_2(\Sp(2g,2))$ are surjective.
\end{prop}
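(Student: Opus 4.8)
The plan is to run the five-term exact sequence in homology for the two group extensions
\[ 1 \to Y \to \Sp(2g,\bZ/4) \to \fH \to 1, \qquad 1 \to Z \to \fH \to \Sp(2g,2) \to 1, \]
exactly as in the proof of Proposition \ref{pr:surj1}. Both extensions are supplied by Remark \ref{rk:qbsp}: there $\fK$ is identified with the preimage of $Y \leq \fsp(2g,2) \leq \Sp(2g,\bZ/4)$ under $\Sp(2g,\bZ) \to \Sp(2g,\bZ/4)$, and since $\Gamma(2g,4) \leq \fK$ this exhibits $\fH = \Sp(2g,\bZ/4)/Y$, sitting in the displayed extension by $Z = \fsp(2g,2)/Y$, which is the orthogonal $\Sp(2g,2)$-module.

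For the first map, the five-term sequence of $1 \to Y \to \Sp(2g,\bZ/4) \to \fH \to 1$ reads
\[ H_2(\Sp(2g,\bZ/4)) \to H_2(\fH) \to H_0(\fH, H_1(Y)) \to H_1(\Sp(2g,\bZ/4)) \to H_1(\fH) \to 0. \]
Since $Y$ is an elementary abelian $2$-group, $H_1(Y) \cong Y$ with the conjugation action of $\fH$. Because $Y$ lies in the abelian normal subgroup $\fsp(2g,2)$, conjugation by $\fsp(2g,2)$, and hence by its image $Z \leq \fH$, is trivial on $Y$; so the $\fH$-action on $Y$ factors through $\fH \to \Sp(2g,2)$ and $H_0(\fH, H_1(Y)) \cong H_0(\Sp(2g,2), Y)$. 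This vanishes for $g \geq 1$ by Lemma \ref{le:H0Y}(i), so $H_2(\Sp(2g,\bZ/4)) \to H_2(\fH)$ is surjective.

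For the second map, the five-term sequence of $1 \to Z \to \fH \to \Sp(2g,2) \to 1$ reads
\[ H_2(\fH) \to H_2(\Sp(2g,2)) \to H_0(\Sp(2g,2), H_1(Z)) \to H_1(\fH) \to H_1(\Sp(2g,2)) \to 0. \]
Again $Z$ is elementary abelian, so $H_1(Z) \cong Z$ as the orthogonal $\Sp(2g,2)$-module, and $H_0(\Sp(2g,2), Z) = 0$ for $g \geq 2$ by Lemma \ref{le:H0Y}(ii). Hence $H_2(\fH) \to H_2(\Sp(2g,2))$ is surjective, and composing recovers the surjectivity of $H_2(\Sp(2g,\bZ/4)) \to H_2(\Sp(2g,2))$ from Proposition \ref{pr:surj1}(ii).

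I do not expect any genuine obstacle: the only substantive input is Lemma \ref{le:H0Y}, already proved, while the identification of the two kernels with $Y$ and $Z$ and the fact that the conjugation action on $Y$ descends to $\Sp(2g,2)$ are bookkeeping carried out in Section \ref{se:symp}. The one point worth spelling out is that the five-term sequence is being fed $H_1(Y) = Y$ and $H_1(Z) = Z$ with the conjugation actions, which is immediate because both kernels are elementary abelian $2$-groups.
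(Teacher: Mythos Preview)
Your proof is correct and follows exactly the same route as the paper: the five-term sequence applied to the two extensions $1\to Y\to\Sp(2g,\bZ/4)\to\fH\to 1$ and $1\to Z\to\fH\to\Sp(2g,2)\to 1$, together with the vanishing $H_0(\Sp(2g,2),Y)=0$ and $H_0(\Sp(2g,2),Z)=0$ from Lemma~\ref{le:H0Y}. Your additional remark that the $\fH$-action on $Y$ factors through $\Sp(2g,2)$ makes explicit a point the paper leaves implicit.
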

\begin{proof}
For the first map, we use the five-term sequence for the short exact sequence
\[ 1 \to Y \to \Sp(2g,\bZ/4) \to \fH \to 1, \] 
and the computation
\[ H_0(\fH,H_1(Y))= H_0(\Sp(2g,2),Y)=0 \]
given in Lemma \ref{le:H0Y}. Note that $Y$ is an elementary abelian
$2$-group, so $H_1(Y)\cong Y$.

The computation for the second map is similar, using the short exact
sequence
\[ 1 \to Z \to \fH \to \Sp(2g,2) \to 1 \]
and the computation $H_0(\Sp(2g,2),Z)=0$ given in Lemma \ref{le:H0Y}.
\end{proof}

\begin{cor}
For $g\ge 3$, the inflation map 
$H^2(\fH, A) \to H^2(\Sp(2g,\bZ/4), A)$ is 
injective for any abelian group of coefficients $A$ with trivial action.
\end{cor}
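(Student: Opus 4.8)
The plan is to deduce this from Proposition \ref{pr:surj2} by exactly the same mechanism that produced the preceding corollary from Proposition \ref{pr:surj1}: turn a surjection in degree-two homology into an injection in degree-two cohomology via the universal coefficient theorem. First I would recall that $H^2(G,A)$ for a group $G$ fits into the universal coefficient short exact sequence
\[ 0 \to \Ext^1_{\bZ}(H_1(G),A) \to H^2(G,A) \to \Hom_{\bZ}(H_2(G),A) \to 0. \]
Since $\Sp(2g,2)$ is perfect for $g\ge 3$ (Lemma \ref{le:H2Sp2gF2} and the discussion around it), and $\fH$ is an extension of $\Sp(2g,2)$ by the $\Sp(2g,2)$-module $Z$ with $H_0(\Sp(2g,2),Z)=0$ (Lemma \ref{le:H0Y}), the abelianisation $H_1(\fH)$ vanishes; likewise $\Sp(2g,\bZ/4)$ is perfect for $g\ge 3$ since its abelianisation surjects onto that of $\fH$ and the kernel $Y$ has $H_0(\Sp(2g,2),Y)=0$. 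Hence both $\Ext^1$ terms die, and the universal coefficient maps identify $H^2(\fH,A)\cong\Hom(H_2(\fH),A)$ and $H^2(\Sp(2g,\bZ/4),A)\cong\Hom(H_2(\Sp(2g,\bZ/4)),A)$.

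Next I would observe that the inflation map $H^2(\fH,A)\to H^2(\Sp(2g,\bZ/4),A)$ is, under these identifications, the map $\Hom(H_2(\fH),A)\to\Hom(H_2(\Sp(2g,\bZ/4)),A)$ induced by the homology transfer (corestriction) map $H_2(\Sp(2g,\bZ/4))\to H_2(\fH)$ associated to the surjection $\Sp(2g,\bZ/4)\rra\fH$. This compatibility between inflation in cohomology and the induced map in homology is standard and holds with any coefficient module. By Proposition \ref{pr:surj2} the first of the two maps there, namely $H_2(\Sp(2g,\bZ/4))\to H_2(\fH)$, is surjective for $g\ge 2$, hence a fortiori for $g\ge 3$. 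Applying the left-exact functor $\Hom(-,A)$ to a surjection yields an injection, so $\Hom(H_2(\fH),A)\hookrightarrow\Hom(H_2(\Sp(2g,\bZ/4)),A)$, which is precisely the assertion that inflation $H^2(\fH,A)\to H^2(\Sp(2g,\bZ/4),A)$ is injective.

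The only point requiring a little care, and the one I would flag, is the perfectness of $\Sp(2g,\bZ/4)$ for $g\ge 3$, which is what makes the $\Ext^1$ terms in the universal coefficient sequences vanish and lets us replace $H^2(-,A)$ by $\Hom(H_2(-),A)$; this follows from perfectness of $\Sp(2g,2)$ together with Lemma \ref{le:H0Y}\,(i) via the five-term sequence for $1\to Y\to\Sp(2g,\bZ/4)\to\fH\to1$, or equivalently from perfectness of $\Sp(2g,\bZ)$ and the surjection $\Sp(2g,\bZ)\rra\Sp(2g,\bZ/4)$. Once that is in hand, everything else is the formal consequence already used for the corollary to Proposition \ref{pr:surj1}, and indeed one could simply say that the proof is identical to that corollary with $\Sp(2g,\bZ/2^n)$ replaced by $\fH$ and $\Sp(2g,\bZ)$ replaced by $\Sp(2g,\bZ/4)$.
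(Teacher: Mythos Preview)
Your proof is correct and follows essentially the same route as the paper: invoke Proposition~\ref{pr:surj2} for the surjection on $H_2$, use perfectness of the groups to kill the $\Ext^1$ terms in the universal coefficient sequence, and apply $\Hom(-,A)$. The paper is terser, simply asserting that $\Sp(2g,\bZ/4)$ is perfect for $g\ge 3$ and hence so is its quotient $\fH$, whereas you establish perfectness of $\fH$ first and then of $\Sp(2g,\bZ/4)$; one terminological slip to fix is that the map $H_2(\Sp(2g,\bZ/4))\to H_2(\fH)$ is the ordinary map induced by the quotient homomorphism, not a transfer or corestriction.
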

\begin{proof}
This follows directly from Proposition \ref{pr:surj2} and the universal coefficient theorem for 
cohomology, as the groups $\Sp(2g,\bZ/4)$ are perfect for 
$g\ge 3$, hence all their quotients are perfect as well.
\end{proof}

\begin{prop}\label{pr:barH0}
For $g\ge 4$ the group $\fH=\Sp(2g,\bZ)/\fK \cong \Sp(2g,\bZ/4)/Y$ 
is isomorphic to the quotient $\bar H_0$ of
the group $H_0$ of Griess (described in Section \ref{se:extraspecial})
by its central subgroup of order two.
\end{prop}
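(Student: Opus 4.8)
The plan is to identify $\fH$ with $\bar H_0$ by showing that both are the \emph{unique} nonsplit extension of $\Sp(2g,2)$ by the orthogonal module $Z\cong(\bZ/2)^{2g+1}$, and then invoking Dempwolff's uniqueness theorem. So the argument breaks into three parts: (a) show $\bar H_0$ is such an extension, (b) show $\fH$ is such an extension, (c) quote uniqueness to conclude $\fH\cong\bar H_0$. Part (b) is essentially already in hand: by Remark \ref{rk:qbsp} we have the short exact sequence $1\to Z\to\fH\to\Sp(2g,2)\to 1$ with $Z$ the orthogonal module, and Theorem \ref{th:main}(iii) (which we may assume, since the overall theorem is being proved piecewise) tells us this does not split. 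So the real content is part (a), together with checking that Dempwolff's hypotheses genuinely apply.

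First I would recall Griess' description of $H_0$ from Section \ref{se:extraspecial}: it has shape $2^{1+(2g+1)}\Sp(2g,2)$, its normal $2$-subgroup $O_2(H_0)$ is the almost extraspecial group $E=2^{1+(2g+1)}$ with centre $Z(E)=Z(H_0)$ of order two, and $H_0/Z(H_0)\cong\Aut(E)_0$, the index-two subgroup of $\Aut(E)$ lying over $\Sp(2g,2)\le\Sp(2g,2)\times\bZ/2=\Out(E)$. Passing to the quotient $\bar H_0=H_0/Z(H_0)$, the image of $E$ becomes $E/Z(E)\cong(\bZ/2)^{2g+1}$, which is the module on which $\Sp(2g,2)$ acts preserving the quadratic form $\fq$ defined (up to central data) by the squaring map $\hat x\mapsto\hat x^2$ on $E$; that is, $E/Z(E)$ is precisely the orthogonal module. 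Thus $\bar H_0$ sits in $1\to(\bZ/2)^{2g+1}\to\bar H_0\to\Sp(2g,2)\to 1$ with the orthogonal action. It remains to see this extension is nonsplit: a splitting of $\bar H_0$ would give a subgroup $S\cong\Sp(2g,2)$ complementing $E/Z(E)$; pulling back to $H_0$, its preimage $\hat S$ would be an extension of $\Sp(2g,2)$ by $\bZ/2$ inside $H_0$, hence (since $\Sp(2g,2)$ is perfect with Schur multiplier trivial for $g\ge 4$, by Lemma \ref{le:H2Sp2gF2}) $\hat S\cong\Sp(2g,2)\times\bZ/2$ would contain a copy of $\Sp(2g,2)$ mapping isomorphically to its image in $\Out(E)$. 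But a complement to $E/Z(E)$ in $H_0/Z(E)$, equivalently to $E$ in $H_0$ modulo the central piece, contradicts the nonsplitting of \eqref{eq:Aut-almost-exspec} (the sequence $1\to(\bZ/2)^{2g}\to\Aut(2^{1+(2g+1)})\to\Sp(2g,2)\times\bZ/2\to 1$, which does not split for $g\ge 4$) once one checks the splitting of $\bar H_0$ would descend to a splitting there. This is the step I expect to require the most care: relating a hypothetical complement in $\bar H_0$ to a complement in the automorphism-group extension, keeping track of the two different central $\bZ/2$'s (the one in $E$ and the extra one in $\Out(E)$).

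Having established that both $\fH$ and $\bar H_0$ are nonsplit extensions of $\Sp(2g,2)$ by the orthogonal module $(\bZ/2)^{2g+1}$ with its natural action, I would then invoke Dempwolff's theorem \cite{Dempwolff:1974a}, quoted in Section \ref{se:extraspecial}, which asserts that for $g\ge 2$ there is a \emph{unique} isomorphism class of nonsplit extensions of $\Sp(2g,2)$ by an elementary abelian group $(\bZ/2)^{2g}$ with nontrivial action. Strictly, Dempwolff's statement as quoted is for the $2g$-dimensional module, so I would need to cite the analogous uniqueness for the $(2g+1)$-dimensional orthogonal module, or reduce to it — for instance via the exceptional isomorphism $\Sp(2g,2)\cong\Orth(2g+1,2)$ and the structure of $H^2(\Sp(2g,2),Z)$, which (since $H^2(\Sp(2g,2),\bZ/2)=0$ for $g\ge 4$ by Lemma \ref{le:H2Sp2gF2} and using the submodule diagram for $Z$) has a unique nonzero class. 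Indeed, the cleanest route may be: compute $H^2(\Sp(2g,2),Z)\cong\bZ/2$ directly from the cohomology of $\Sp(2g,2)$ with the relevant module coefficients, so that there is exactly one nonsplit extension up to equivalence; both $\fH$ and $\bar H_0$ realise the nonzero class, hence are isomorphic as extensions, and in particular $\fH\cong\bar H_0$ as groups.

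Finally I would note the identification $\fH\cong\Sp(2g,\bZ/4)/Y$ asserted in the statement: this is immediate from Remark \ref{rk:qbsp}, where $\fK$ is described as the preimage of $Y\le\fsp(2g,2)\le\Sp(2g,\bZ/4)$ under $\Sp(2g,\bZ)\to\Sp(2g,\bZ/4)$, so that $\Sp(2g,\bZ)/\fK\cong\Sp(2g,\bZ/4)/Y$. The main obstacle, as indicated, is part (a): verifying carefully that $\bar H_0$ is genuinely a nonsplit extension with the correct module, which hinges on bookkeeping with the central extensions $Z(E)\le E\le H_0$ and the two factors in $\Out(E)$, and on the nonsplitting statement \eqref{eq:Aut-almost-exspec}. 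Everything else is either already proved in the excerpt or a routine appeal to the quoted theorems of Griess and Dempwolff.
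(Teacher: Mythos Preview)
Your overall strategy---show both $\fH$ and $\bar H_0$ are nonsplit extensions of $\Sp(2g,2)$ by the orthogonal module $Z$, then use $H^2(\Sp(2g,2),Z)\cong\bZ/2$ for uniqueness---is exactly what the paper does. But there is a circularity: you invoke Theorem~\ref{th:main}(iii) for the nonsplitting of $\fH$, whereas in the paper this nonsplitting is established \emph{inside} the proof of this very proposition. The paper passes first to the further quotient $\Sp(2g,\bZ/4)/\Gamma(2g,2,4)$, which is an extension of $\Sp(2g,2)$ by the $2g$-dimensional module $U\cong\Gamma(2g,2)/\Gamma(2g,2,4)$, and shows nonsplitting there by exhibiting an explicit involution in $\Sp(2g,2)$ (swapping one pair of dual basis vectors) that lifts only to an element of order four. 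Since $\Gamma(2g,4)\subseteq\fK\subseteq\Gamma(2g,2,4)$, the group $\fH$ surjects onto this nonsplit $U$-extension, so $\fH$ itself is nonsplit.

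This detour through the $U$-quotient also resolves the dimension mismatch you flag: Dempwolff's theorem is applied directly to the $2g$-dimensional module, yielding $H^2(\Sp(2g,2),U)\cong\bZ/2$ and identifying $\Sp(2g,\bZ/4)/\Gamma(2g,2,4)$ with $\Aut(E)'$. The computation of $H^2(\Sp(2g,2),Z)$ then follows from the short exact sequence $0\to\bF_2\to Z\to U\to 0$ together with $H^2(\Sp(2g,2),\bF_2)=0$ (Lemma~\ref{le:H2Sp2gF2}), which forces $H^2(\Sp(2g,2),Z)\hookrightarrow H^2(\Sp(2g,2),U)\cong\bZ/2$; the nonsplit extension $\fH$ just produced shows this injection is onto. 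This bypasses entirely your proposed argument for $\bar H_0$ being nonsplit: both $\fH$ and $\bar H_0$ surject onto the unique nonsplit $U$-extension $\Aut(E)'$ (for $\bar H_0$ this is just the statement $H_0/Z(E)\cong\Aut(E)'$), hence both are nonsplit $Z$-extensions, and uniqueness finishes. Your Schur-multiplier-and-pullback approach to $\bar H_0$ would probably work, but it is doing by hand what the passage to the $U$-quotient does for free.
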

\begin{proof}
Examine the extension
\begin{equation}\label{eq:extension}
1 \to \Gasp(2g,2)/\Gasp(2g,2,4) \to \Sp(2g,\bZ/4)/\Gasp(2g,2,4) \to \Sp(2g,2) \to 1.
\end{equation}
This is nonsplit, since the element of order two in $\Sp(2g,2)$ which swaps the
first basis vectors of $L$ and $L^*$ and fixes the remaining basis vectors
does not lift to an element of order two in $\Sp(2g,\bZ/4)/\Gasp(2g,2,4)$.

Let $E$ be the almost extraspecial group $O_2(H_0)$ of shape $2^{1+(2g+1)}$.
The action of $\Sp(2g,2)$ on $\Gasp(2g,2)/\Gasp(2g,2,4)\cong U$
is the same as the action of $\Out(E)'$ 
on $\Inn(E)$ (see \eqref{eq:Aut-almost-exspec}), namely the natural symplectic module.
It follows from the main theorem of Dempwolff \cite{Dempwolff:1974a} that
\[ H^2(\Sp(2g,2),\Gasp(2g,2)/\Gasp(2g,2,4)) \]
is one-dimensional. Thus $\Sp(2g,\bZ/4)/\Gasp(2g,2,4)$ is isomorphic to the group $\Aut(E)'$.

Since $\Gasp(2g,4)\subseteq \fK\subseteq \Gasp(2g,2,4)$ it follows that the short exact sequence
\[ 1 \to Z \to \fH \to \Sp(2g,2) \to 1 \]
also does not split. 
We have $\Gasp(2g,2,4)/\fK\cong \bZ/2$, and since $g\ge 4$, 
by Lemma \ref{le:H2Sp2gF2} we have $H^2(\Sp(2g,2),\bZ/2)=0$.
So $H^2(\Sp(2g,2),\Gasp(2g,2,4)/Y)=0$,
and hence $H^2(\Sp(2g,2),Z)$ is at most one-dimensional. 
Since we have a nonsplit extension \eqref{eq:extension},
it is exactly one-dimensional. The modules $E/[E,E]$ and $Z$ for $\Sp(2g,2)$ are both isomorphic
to the natural orthogonal module of dimension $2g+1$, so it follows that $\fH$
is isomorphic to $\bar H_0$.
\end{proof}

\begin{rk}
In the case $g=3$, Proposition \ref{pr:barH0} is still true, but needs a bit more work.
The group $H^2(\Sp(6,2),\bZ/2)$ is one-dimensional by Lemma \ref{le:H2Sp2gF2}, and we are left
with the nasty possibility that $\fH=\Sp(6,\bZ/4)/Y$ is isomorphic to a quotient of
the pullback of $\bar H_0 \to \Sp(6,2)$ and $\widetilde{\Sp(6,2)} \to \Sp(6,2)$
by the diagonal central element of order two. In order to prove that $\fH$ is
really isomorphic to $\bar H_0$ and not this other group, it suffices to construct a matrix representation of
a double cover of $\fH$ of dimension eight. Explicit matrices for this representation
were given in Section \ref{se:extraspecial}. On the other hand, the smallest faithful
irreducible complex representation in the case of the other possibility has dimension $64$.
It is worth noticing, though, that it does not matter which possibility is true, if we just
wish to prove the next theorem.
\end{rk}

\begin{lemma}\label{le:H2}
$H_2(\Sp(2g,\bZ))\cong\bZ$ for $g\ge 4$ and $H_2(\Sp(2g,\bZ))\cong\bZ \oplus \bZ/2$ for
$g=3$.
\end{lemma}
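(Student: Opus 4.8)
The plan is to split $H_2(\Sp(2g,\bZ))$ into a free part and a finite torsion part, identify the free part with the $\bZ$ detected by Meyer's class, and pin down the torsion. Since $\Sp(2g,\bZ)$ is perfect for $g\ge 3$ (so $H_1(\Sp(2g,\bZ))=0$) and finitely presented (so $H_2(\Sp(2g,\bZ))$ is finitely generated), the universal coefficient theorem gives $H^2(\Sp(2g,\bZ),\bZ)\cong\Hom(H_2(\Sp(2g,\bZ)),\bZ)$; Meyer's theorem \cite{Meyer:1973a} identifies the left-hand side with $\bZ$, so $H_2(\Sp(2g,\bZ))\cong\bZ\oplus T$ with $T$ finite, the projection onto the free summand being the class $\tfrac14[\tau]$ of the central extension $\widetilde{\Sp(2g,\bZ)}$. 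Everything reduces to computing the torsion $T$ of the Schur multiplier.

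For $g\ge 4$ I would show $T=0$ one prime at a time. At $p=2$, the universal coefficient theorem again gives $H^2(\Sp(2g,\bZ),\bZ/2)\cong\bZ/2\oplus\Hom(T,\bZ/2)$, so it suffices that $H^2(\Sp(2g,\bZ),\bZ/2)\cong\bZ/2$: the reduction of $\tfrac14[\tau]$ supplies the nonzero class, and that it pulls back from $\fH$ through $\Sp(2g,\bZ/4)$ follows from Theorem~\ref{th:main}(vi)--(viii) together with the injective inflations of the Corollaries to Propositions~\ref{pr:surj1} and~\ref{pr:surj2}; the absence of any further class is exactly the content of the cohomology computations of Section~\ref{se:coho} (run the Lyndon--Hochschild--Serre spectral sequence for $1\to\Gamma(2g,2)\to\Sp(2g,\bZ)\to\Sp(2g,2)\to 1$, using $H^2(\Sp(2g,2),\bZ/2)=0$ from Lemma~\ref{le:H2Sp2gF2}, the identification of $H^1(\Gamma(2g,2),\bZ/2)$ with the adjoint module of Section~\ref{se:symp}, and the vanishing of the relevant low-degree $\Sp(2g,2)$-cohomology of that module). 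Hence $\Hom(T,\bZ/2)=0$. At odd $p$ the vanishing of the $p$-part of $T$ is a known consequence of the congruence subgroup property of $\Sp(2g,\bZ)$ (Bass--Milnor--Serre, Mennicke) together with Moore's computation of the metaplectic kernel and Deligne's theorem \cite{Deligne:1978a}; equivalently it follows from the full spectral sequence for $1\to\Gamma(2g,p)\to\Sp(2g,\bZ)\to\Sp(2g,\bF_p)\to 1$, using the triviality of $H_2(\Sp(2g,\bF_p))$ for $g\ge 3$ and the fact that $\Gamma(2g,p)^{\mathrm{ab}}\cong\fsp(2g,\bF_p)$ (Sato's computation \cite{Sato:2010a} of the derived subgroup in the odd case) has no trivial $\Sp(2g,\bF_p)$-quotient, the odd-characteristic analogue of Lemma~\ref{le:H0Y}. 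Thus $T=0$ and $H_2(\Sp(2g,\bZ))\cong\bZ$.

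For $g=3$ the same strategy applies, but $\Sp(6,2)$ now carries its exceptional Schur multiplier $H_2(\Sp(6,2))\cong\bZ/2$ with $H^2(\Sp(6,2),\bZ/2)\cong\bZ/2$ (Lemma~\ref{le:H2Sp2gF2}), and this survives: the five-term sequence for $1\to\Gamma(6,2)\to\Sp(6,\bZ)\to\Sp(6,2)\to 1$ shows that $H_2(\Sp(6,\bZ))$ maps onto $H_2(\Sp(6,2))\cong\bZ/2$, and the point is that this $\bZ/2$ is genuine torsion, independent of the free summand. To see this I would compare with the finite quotients: both the metaplectic class $[\tfrac14\tau]_2$ and the pullback of the exceptional double cover of $\Sp(6,2)$ are inflated from $H^2(\Sp(6,\bZ/4),\bZ/2)$ (Corollary to Proposition~\ref{pr:surj1}), and in $H^2(\Sp(6,\bZ/4),\bZ/2)$ --- which for $g=3$ is $(\bZ/2)^2$ rather than $\bZ/2$, the $g=3$ instance of Theorem~\ref{th:main}(vii) treated in the appendix, the extra summand coming precisely from $H^2(\Sp(6,2),\bZ/2)$ --- these two classes are independent, one being inflated from $\Sp(6,2)$ and the other not (they are also separated by restriction to the theta subgroup, over which $[\tfrac14\tau]_2$ vanishes by Remark~(2) of Section~\ref{se:sigmod8} but the $\Sp(6,2)$-class, restricted to $\Spq(6,\bZ)/\Gamma(6,2)\cong\Orth^+(6,2)$, does not). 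Hence $\Hom(T,\bZ/2)\cong\bZ/2$, so $T$ is a cyclic $2$-group; tracking the same comparison through $\Sp(6,\bZ/2^n)$ shows $T$ has exponent $2$, and the odd part vanishes as before, giving $H_2(\Sp(6,\bZ))\cong\bZ\oplus\bZ/2$.

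The main obstacle is the $g=3$ case. The $g\ge 4$ argument is essentially the universal coefficient theorem fed by the $2$-local computations already in place plus a standard odd-primary input, but for $g=3$ one must genuinely disentangle the ``topological'' $\bZ$ coming from $\pi_1(\Sp(6,\bR))$ from the exceptional $\bZ/2$ coming from the exotic double cover of $\Sp(6,2)$, and in particular rule out a single $\bZ/4$ --- which is exactly the low-genus bookkeeping that the appendix's analysis of $\Sp(6,\bZ/2^n)$ and $\fH$ is designed to carry out.
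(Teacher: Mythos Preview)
The paper does not prove this lemma at all: it simply cites Stein~\cite{Stein:1971a,Stein:1975a}, Behr~\cite{Behr:1975a}, and Putman~\cite{Putman:2012a}. Lemma~\ref{le:H2} is an external \emph{input} to the paper, used in the proof of Theorem~\ref{th:H2Sp2gZ4} to bound $H_2(\Sp(2g,\bZ/2^n))$ and $H_2(\fH)$ from above; Corollary~\ref{co:fHtoSp} and hence parts (vi)--(viii) of Theorem~\ref{th:main} depend on it. So your argument is circular: when you invoke Theorem~\ref{th:main}(vi)--(viii), ``the cohomology computations of Section~\ref{se:coho}'', the appendix value $H^2(\Sp(6,\bZ/4),\bZ/2)\cong(\bZ/2)^2$, and the like, you are using consequences of the very lemma you are trying to establish.

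Your non-circular fallback --- the Lyndon--Hochschild--Serre spectral sequence for $\Gamma(2g,2)\lhd\Sp(2g,\bZ)$ and its odd-prime analogue --- is a plausible route in spirit, but you have not carried it out. The paper supplies only $H_0(\Sp(2g,2),\fsp(2g,2))=0$ (Lemma~\ref{le:H0Y}); to bound the $E_2^{1,1}$ and $E_2^{0,2}$ terms you would need $H^1(\Sp(2g,2),\fsp(2g,2)^*)$ and control of the $\Sp(2g,2)$-invariants in $H^2(\Gamma(2g,2),\bZ/2)$, neither of which is available here. The phrase ``the vanishing of the relevant low-degree $\Sp(2g,2)$-cohomology of that module'' is doing all the real work and is not justified. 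The same applies to the odd-primary sketch and to the $g=3$ step that is supposed to rule out a $\bZ/4$ in place of $\bZ/2\oplus\bZ/2$.
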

\begin{proof}
See for example Stein \cite[Theorem 2.2]{Stein:1975a}
for $g=3$, and 
Behr \cite[ Korollar~3.2]{Behr:1975a} together with 
Stein \cite[Theorem 5.3 and Remark 5 following
Corollary 5.5]{Stein:1971a} for $g\ge 4$.
See also Putman \cite[Theorem 5.1]{Putman:2012a} for a different
proof in the case $g\ge 4$.
\end{proof}

\begin{theorem}\label{th:H2Sp2gZ4}
For $g\ge 3$, we have $H_1(\fH)=0$. 
For $g\ge 4$, we have $H_2(\fH)\cong\bZ/2$, and for $g=3$, we have
$H_2(\fH)\cong\bZ/2\oplus \bZ/2$.
The map
\[ H_2(\Sp(2g,\bZ/{2^n})) \to H_2(\fH) \]
is an isomorphism for $n\ge 2$.
\end{theorem}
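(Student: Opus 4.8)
The plan is to collect the inputs already assembled in the preceding lemmas and propositions and string them together. First I would dispose of $H_1(\fH)=0$: since $g\ge 3$, the group $\Sp(2g,\bZ)$ is perfect, and $\fH$ is a quotient of it, hence also perfect, so $H_1(\fH)=0$. Next I would compute $H_2(\fH)$ via the five-term exact sequence for the extension $1\to Z\to\fH\to\Sp(2g,2)\to 1$, namely
\[ H_2(\fH) \to H_2(\Sp(2g,2)) \to H_0(\Sp(2g,2),Z) \to H_1(\fH)\to H_1(\Sp(2g,2))\to 0. \]
By Lemma \ref{le:H0Y} we have $H_0(\Sp(2g,2),Z)=0$, so $H_2(\fH)\to H_2(\Sp(2g,2))$ is onto, and its kernel is the image of the coinflation from $H_2(Z)_{\Sp(2g,2)}$. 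For $g\ge 4$, $H_2(\Sp(2g,2))=0$ by Lemma \ref{le:H2Sp2gF2}, so $H_2(\fH)$ is a quotient of $H_0(\Sp(2g,2),H_2(Z))$; here I would use that the contribution actually lands in $H_2$ of the extension as $\fq\colon Z\to\bZ/2$, i.e. the quadratic refinement identified in Remark \ref{rk:qbsp}, together with Proposition \ref{pr:barH0} identifying $\fH$ with $\bar H_0$, whose central extension $\tilde\fH$ (the quotient of Griess' $H_0$) realizes a nonzero class. Concretely, the almost extraspecial group $2^{1+(2g+1)}\le\tilde\fH$ of part (ix) shows the restriction of the generator of $H^2(\fH,\bZ/2)$ to $Z$ is the nonsingular quadratic form $\fq$, which is nonzero, so $H^2(\fH,\bZ/2)$ (hence by universal coefficients $H_2(\fH)$, since $\fH$ is perfect) is at least $\bZ/2$; combined with the upper bound from the five-term sequence and the one-dimensionality of $H^2(\Sp(2g,2),Z)$ established in the proof of Proposition \ref{pr:barH0}, we get exactly $H_2(\fH)\cong\bZ/2$. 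For $g=3$ the same argument gives an extra $\bZ/2$ coming from $H_2(\Sp(6,2))\cong\bZ/2$, which now appears as a direct summand because the surjection $H_2(\fH)\to H_2(\Sp(6,2))$ of Proposition \ref{pr:surj2} splits off (there is no extension problem since both summands are $\bZ/2$ and the corresponding extension is pulled back from $\Sp(6,2)$), yielding $H_2(\fH)\cong\bZ/2\oplus\bZ/2$.

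For the last assertion, that $H_2(\Sp(2g,\bZ/2^n))\to H_2(\fH)$ is an isomorphism for $n\ge 2$, I would argue in two stages. For $n=2$: the map $H_2(\Sp(2g,\bZ/4))\to H_2(\fH)$ is surjective by Proposition \ref{pr:surj2}; for injectivity I would use the five-term sequence of $1\to Y\to\Sp(2g,\bZ/4)\to\fH\to 1$, whose relevant segment reads $H_2(\Sp(2g,\bZ/4))\to H_2(\fH)\to H_0(\fH,H_1(Y))$, and $H_0(\fH,H_1(Y))=H_0(\Sp(2g,2),Y)=0$ by Lemma \ref{le:H0Y}, so actually the map is surjective here — injectivity instead follows by a counting/size comparison: we already know $H_2(\fH)\cong\bZ/2$ (for $g\ge 4$), and $H_2(\Sp(2g,\bZ/4))$ surjects onto it; it remains to see $H_2(\Sp(2g,\bZ/4))$ is no larger. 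This is where I would invoke the Corollary following Proposition \ref{pr:surj1}: inflation $H^2(\Sp(2g,\bZ/2^n),\bZ/2)\to H^2(\Sp(2g,\bZ),\bZ/2)\cong\bZ/2$ is injective for $g\ge 3$, and dually $H_2(\Sp(2g,\bZ/2^n))$ injects — no, more carefully: $H_2(\Sp(2g,\bZ))\to H_2(\Sp(2g,\bZ/2^n))$ is surjective (Proposition \ref{pr:surj1}(i)) and $H_2(\Sp(2g,\bZ))\cong\bZ$ (Lemma \ref{le:H2}), so $H_2(\Sp(2g,\bZ/2^n))$ is cyclic; since it surjects onto $\bZ/2$ and, being a quotient of $\bZ$ that must be finite (the congruence subgroup is torsion-free abelian of finite rank but the five-term sequence forces finiteness of the cokernel contribution), one pins it down to exactly $\bZ/2$. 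Then the surjection $H_2(\Sp(2g,\bZ/4))\to H_2(\fH)$ between two groups isomorphic to $\bZ/2$ is an isomorphism. For $n\ge 3$: Proposition \ref{pr:surj1}(ii) gives a surjection $H_2(\Sp(2g,\bZ/2^{n+1}))\to H_2(\Sp(2g,\bZ/2^n))$, and since all of these are $\bZ/2$ by the same cyclicity-plus-surjectivity argument (they all surject onto $\bZ/2$ and are cyclic quotients of $\bZ$), the maps in the tower are all isomorphisms, and composing with the $n=2$ case finishes the proof.

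The main obstacle I anticipate is controlling the \emph{size} of $H_2(\Sp(2g,\bZ/2^n))$ and $H_2(\fH)$ from above — the five-term sequences give surjectivity cheaply but the injectivity/upper-bound half requires either the comparison with $H_2(\Sp(2g,\bZ))\cong\bZ$ (which only shows cyclicity, not finiteness a priori) or the explicit identification $\fH\cong\bar H_0$ via Dempwolff's uniqueness theorem and Griess' construction, which is exactly what Proposition \ref{pr:barH0} supplies. In other words, the real content is already packaged in Propositions \ref{pr:surj1}, \ref{pr:surj2}, \ref{pr:barH0}; what remains for this theorem is to trace through the five-term sequences and to note that a surjection between two copies of $\bZ/2$ is an isomorphism. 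The one genuinely delicate point is the $g=3$ case, where one must check that the extra $\bZ/2$ from $H_2(\Sp(6,2))$ really splits off rather than combining with the metaplectic $\bZ/2$; I would handle this by noting that the exceptional double cover of $\Sp(6,2)$ is inflated from $\Sp(6,2)$ itself and so gives an independent central extension, and the preceding Remark already flags that the precise structure of $\fH$ for $g=3$ does not affect the homology computation.
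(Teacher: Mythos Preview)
Your overall structure is right, and you correctly identify the lower bound: Proposition~\ref{pr:barH0} gives the nonsplit central extension $\tilde\fH\to\fH$, whose restriction to $Z$ is the almost extraspecial group, so $H^2(\fH,\bZ/2)\ne 0$ and hence $H_2(\fH)$ has at least a $\bZ/2$. The chain of surjections from Propositions~\ref{pr:surj1} and~\ref{pr:surj2} is also exactly what the paper uses.

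The genuine gap is the \emph{upper bound}. Your first attempt, via the Lyndon--Hochschild--Serre spectral sequence for $1\to Z\to\fH\to\Sp(2g,2)\to 1$, is wrong as stated: when $H_2(\Sp(2g,2))=0$ the spectral sequence does \emph{not} say that $H_2(\fH)$ is a quotient of $H_0(\Sp(2g,2),H_2(Z))$; there is also an $E^2_{1,1}=H_1(\Sp(2g,2),Z)$ contribution, and neither this nor $H_0(\Sp(2g,2),\Lambda^2 Z)$ is computed anywhere in the paper. Your second attempt, via the surjection $H_2(\Sp(2g,\bZ))\twoheadrightarrow H_2(\Sp(2g,\bZ/2^n))$, correctly shows the target is finite cyclic (a quotient of $\bZ$, necessarily finite since $\Sp(2g,\bZ/2^n)$ is finite), but ``cyclic, finite, and surjecting onto $\bZ/2$'' does not pin it down to $\bZ/2$: it could a priori be $\bZ/2^k$ for any $k\ge 1$. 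The missing ingredient is Deligne's theorem, invoked in the paper's proof: every finite quotient of $\widetilde{\Sp(2g,\bZ)}$ factors through $\Mp(2g,\bZ)$, so the kernel of $H_2(\Sp(2g,\bZ))\to H_2(\Sp(2g,\bZ/2^n))$ contains $2\bZ$, forcing the image to be at most $\bZ/2$ (and at most $\bZ/2\oplus\bZ/2$ when $g=3$). Once you have this, the surjection between two copies of $\bZ/2$ is an isomorphism, exactly as you say; and for $g=3$ the upper bound $\bZ/2\oplus\bZ/2$ (rather than $\bZ/4$) also resolves the extension problem you flagged.
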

\begin{proof}
The computation of the abelianisation $H_1(\fH)$ is straightforward.
It follows from Propositions \ref{pr:surj1} and \ref{pr:surj2} that
for $n\ge 2$ the maps
\begin{equation}\label{eq:surjH2}
H_2(\Sp(2g,\bZ)) \to H_2(\Sp(2g,\bZ/2^n)) \to H_2(\Sp(2g,\bZ/4))\to H_2(\fH) \to
H_2(\Sp(2g,2)) 
\end{equation}
are surjective, and from Deligne's Theorem (see the introduction)
that the kernel of the first map contains every element divisible by two. Consulting
Lemma \ref{le:H2}, we see that $H_2(\Sp(2g,\bZ/2^n))$ and  $H_2(\fH)$
are quotients of the groups given.

By Proposition \ref{pr:barH0}, there is a nontrivial
element of $H_2(\fH,\bZ/2)$ which is killed by the map to $H_2(\Sp(2g,2))$.
Namely, the central extension $\tilde\fH\to \fH$
is not inflated from $\Sp(2g,2)$ because 
the kernel of $\tilde \fH \to \Sp(2g,2)$ is 
the nonabelian group $E$.

Comparing the value of $H_2(\Sp(2g,\bZ))$ given in Lemma \ref{le:H2}
with the value of $H_2(\Sp(2g,2))$ given in Lemma
\ref{le:H2Sp2gF2}, the theorem follows.
\end{proof}

\begin{cor}\label{co:fHtoSp}
We have $H^2(\fH,\bZ/2)\cong\bZ/2$. For $g\ge 3$ and $n\ge 2$, the map
\[ H^2(\fH, A) \to H^2(\Sp(2g,\bZ/{2^n}), A) \]
is an isomorphism for any abelian group of coefficients $A$ with trivial action.
\end{cor}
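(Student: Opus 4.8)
The plan is to deduce Corollary~\ref{co:fHtoSp} from Theorem~\ref{th:H2Sp2gZ4} by applying the universal coefficient theorem, exactly as was done in the earlier corollaries following Propositions~\ref{pr:surj1} and~\ref{pr:surj2}. First I would record that $\fH$ and all the groups $\Sp(2g,\bZ/2^n)$ are perfect for $g\ge 3$: indeed $\Sp(2g,2)$ is perfect for $g\ge 3$ (Lemma~\ref{le:H2Sp2gF2}), and $\Sp(2g,\bZ/2^n)$ is an extension of the perfect group $\Sp(2g,2)$ by an elementary abelian $2$-group built up from copies of $\fsp(2g,2)$; since $H_0(\Sp(2g,2),\fsp(2g,2))=0$ for $g\ge 2$ by Lemma~\ref{le:H0Y}(ii), these extensions have no nontrivial abelian quotient, hence are perfect, and so is their quotient $\fH$ (this was also noted in the proof of Theorem~\ref{th:H2Sp2gZ4}, where $H_1(\fH)=0$). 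Perfectness means $\mathrm{Hom}(H_1(-),A)=0$ for any coefficient group $A$ with trivial action, so the universal coefficient short exact sequence collapses to
\[ H^2(G,A)\cong \mathrm{Ext}^1_{\bZ}(H_1(G),A)\oplus \mathrm{Hom}(H_2(G),A)=\mathrm{Hom}(H_2(G),A) \]
for $G\in\{\fH,\Sp(2g,\bZ/2^n)\}$, and this identification is natural in $G$.

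Next I would apply this naturality to the map $\fH\to$ wait — the map in the corollary goes $H^2(\fH,A)\to H^2(\Sp(2g,\bZ/2^n),A)$, which is inflation along the surjection $\Sp(2g,\bZ/2^n)\twoheadrightarrow\fH$ (for $n\ge 2$ the kernel $\fK$ reduces to $Y\le\fsp(2g,2)\le\Sp(2g,\bZ/4)$, and one uses $\Sp(2g,\bZ/2^n)\to\Sp(2g,\bZ/4)\to\fH$). Under the identification above, this inflation map is $\mathrm{Hom}(-,A)$ applied to the induced map $H_2(\Sp(2g,\bZ/2^n))\to H_2(\fH)$ on homology. But Theorem~\ref{th:H2Sp2gZ4} states precisely that this homology map is an isomorphism for $n\ge 2$ and $g\ge 3$. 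Applying the (contravariant, exact) functor $\mathrm{Hom}(-,A)$ to an isomorphism yields an isomorphism, so $H^2(\fH,A)\to H^2(\Sp(2g,\bZ/2^n),A)$ is an isomorphism. Finally, for the first assertion $H^2(\fH,\bZ/2)\cong\bZ/2$, take $A=\bZ/2$: since $H_2(\fH)\cong\bZ/2$ for $g\ge 4$ by Theorem~\ref{th:H2Sp2gZ4}, we get $H^2(\fH,\bZ/2)\cong\mathrm{Hom}(\bZ/2,\bZ/2)\cong\bZ/2$.

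There is essentially no obstacle here: the content has already been extracted in Theorem~\ref{th:H2Sp2gZ4}, and the corollary is a formal consequence of the universal coefficient theorem plus perfectness. The only point requiring a word of care is the identification of the cohomology map as the $\mathrm{Hom}$-dual of the homology map — one should invoke the naturality of the universal coefficient sequence with respect to group homomorphisms (equivalently, that for perfect groups $H^2(-,A)$ is a contravariant functor naturally isomorphic to $\mathrm{Hom}(H_2(-),A)$) rather than just knowing the two $H^2$ groups are abstractly isomorphic. Everything else is routine and exactly parallels the two earlier corollaries, so the proof can be kept to a few lines.

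\begin{proof}
The groups $\Sp(2g,\bZ/2^n)$ are perfect for $g\ge 3$ (their abelianisations vanish, as in the proof of Proposition~\ref{pr:surj1} and Lemma~\ref{le:H0SpH1Gamma}), hence so is the quotient $\fH$; this is also recorded as $H_1(\fH)=0$ in Theorem~\ref{th:H2Sp2gZ4}. For a perfect group $G$ and any coefficient group $A$ with trivial action, the universal coefficient sequence for cohomology gives a natural isomorphism $H^2(G,A)\cong\mathrm{Hom}(H_2(G),A)$. The map in the statement is inflation along the surjection $\Sp(2g,\bZ/2^n)\to\fH$ (for $n\ge 2$, factoring through $\Sp(2g,\bZ/4)$), and under these natural isomorphisms it is identified with the $\mathrm{Hom}(-,A)$-dual of the homology map $H_2(\Sp(2g,\bZ/2^n))\to H_2(\fH)$, which is an isomorphism for $n\ge 2$ and $g\ge 3$ by Theorem~\ref{th:H2Sp2gZ4}. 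Therefore $H^2(\fH,A)\to H^2(\Sp(2g,\bZ/2^n),A)$ is an isomorphism. Taking $A=\bZ/2$ and using $H_2(\fH)\cong\bZ/2$ for $g\ge 4$ from Theorem~\ref{th:H2Sp2gZ4} gives $H^2(\fH,\bZ/2)\cong\mathrm{Hom}(\bZ/2,\bZ/2)\cong\bZ/2$.
\end{proof}
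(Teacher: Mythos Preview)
Your proof is correct and follows exactly the same approach as the paper's own proof, which simply reads ``This follows from Theorem~\ref{th:H2Sp2gZ4} and the universal coefficient theorem.'' Your version is a careful unpacking of that one-liner, including the naturality observation needed to conclude that the induced map (not just the abstract groups) is an isomorphism.
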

\begin{proof}
This follows from Theorem \ref{th:H2Sp2gZ4} and the universal coefficient theorem.
\end{proof}

\noindent
\phantomsection
\begin{center}
\textbf{\large Summary of homology and cohomology groups}\bigskip
\end{center}
\addcontentsline{toc}{section}{Summary of homology and cohomology groups}
\setcounter{equation}{0}

Values for $g\ge 4\begin{cases} \text{even} \\ \text{odd} \end{cases}$
are as follows:

\[ \renewcommand{\arraystretch}{1.4}
\begin{array}{c|ccccc}
\hline
\text{\rm Group}&H_1(-) & H_2(-)&H^2(-,\bZ)&H^2(-,\bZ/8)&H^2(-,\bZ/2) \\ \hline
\mcg{g} & 0&\bZ&\bZ&\bZ/8&\bZ/2 \\
\Sp(2g,\bZ)&0&\bZ&\bZ&\bZ/8&\bZ/2 \\
P\Sp(2g,\bZ)&0&\begin{cases}\bZ\oplus \bZ/2 \\ \bZ \end{cases} & \bZ &
\begin{cases} \bZ/8 \oplus \bZ/2 \\ \bZ/8 \end{cases} & 
\begin{cases} \bZ/2 \oplus \bZ/2 \\ \bZ/2 \end{cases}  \\
\Sp(2g,\bZ/4)&0 & \bZ/2 &0& \bZ/2 & \bZ/2 \\
P\Sp(2g,\bZ/4) & 0 & 
\begin{cases} \bZ/2 \oplus \bZ/2 \\ \bZ/4 \end{cases} &
0 & \begin{cases} \bZ/2 \oplus \bZ/2 \\ \bZ/4 \end{cases}&
 \begin{cases} \bZ/2 \oplus \bZ/2 \\ \bZ/2 \end{cases} \\
\fH = \Sp(2g,\bZ/4)/Y & 0 & \bZ/2 & 0 & \bZ/2 & \bZ/2  \\
\Sp(2g,2) & 0 & 0&0 &0&0 \\ \hline
\end{array} \]
\bigskip

Values for $g=3$:

\[ \renewcommand{\arraystretch}{1.4}
\begin{array}{c|ccccc}
\hline
\text{\rm Group}&H_1(-) & H_2(-)&H^2(-,\bZ)&H^2(-,\bZ/8)&H^2(-,\bZ/2) \\ \hline
\mcg{3} & 0&\bZ \oplus \bZ/2 &\bZ&\bZ/8\oplus \bZ/2&\bZ/2 \oplus \bZ/2 \\
\Sp(6,\bZ)&0&\bZ\oplus \bZ/2&\bZ&\bZ/8\oplus \bZ/2&\bZ/2\oplus\bZ/2 \\
P\Sp(6,\bZ)&0&\bZ\oplus \bZ/2 & \bZ &
\bZ/8 \oplus \bZ/2 & \bZ/2  \\
\Sp(6,\bZ/4)&0 & \bZ/2\oplus \bZ/2 &0& \bZ/2\oplus\bZ/2 & \bZ/2\oplus\bZ/2 \\
P\Sp(6,\bZ/4)&0& \bZ/4\oplus\bZ/2 & 0 & \bZ/4\oplus\bZ/2 & \bZ/2\oplus\bZ/2 \\
\fH = \Sp(6,\bZ/4)/Y & 0 & \bZ/2\oplus\bZ/2 & 0 & \bZ/2\oplus\bZ/2 & \bZ/2\oplus\bZ/2  \\
\Sp(6,2) & 0 & \bZ/2&0 &\bZ/2&\bZ/2 \\ \hline
\end{array} \]
\bigskip

\newpage

Values for $g=2$:

\[ \renewcommand{\arraystretch}{1.4}
\begin{array}{c|ccccc}
\hline
\text{\rm Group}&H_1(-) & H_2(-)&H^2(-,\bZ)&H^2(-,\bZ/8)&H^2(-,\bZ/2) \\ \hline
\mcg{2} & \bZ/10&\bZ/2 &\bZ/10&(\bZ/2)^2&(\bZ/2)^2 \\
\Sp(4,\bZ)&\bZ/2&\bZ\oplus \bZ/2&\bZ\oplus\bZ/2&\bZ/8\oplus(\bZ/2)^2&(\bZ/2)^3 \\
P\Sp(4,\bZ)&\bZ/2&\bZ\oplus(\bZ/2)^2 & \bZ\oplus\bZ/2 &
\bZ/8 \oplus(\bZ/2)^3 & (\bZ/2)^4  \\
\Sp(4,\bZ/4)&\bZ/2 & (\bZ/2)^2 &\bZ/2& (\bZ/2)^3 & (\bZ/2)^3 \\
P\Sp(4,\bZ/4)&\bZ/2& (\bZ/2)^3 & \bZ/2 & (\bZ/2)^4 & (\bZ/2)^4 \\
\fH = \Sp(4,\bZ/4)/Y & \bZ/2 & (\bZ/2)^2 & \bZ/2 & (\bZ/2)^3 & (\bZ/2)^3  \\
\Sp(4,2) & \bZ/2 & \bZ/2&\bZ/2 &(\bZ/2)^2&(\bZ/2)^2 \\ \hline
\end{array} \]
\bigskip

Values for $g=1$:

\[ \renewcommand{\arraystretch}{1.4}
\begin{array}{c|ccccc}
\hline
\text{\rm Group}&H_1(-) & H_2(-)&H^2(-,\bZ)&H^2(-,\bZ/8)&H^2(-,\bZ/2) \\ \hline
\mcg{1} & \bZ/12&0 &\bZ/12&\bZ/4&\bZ/2 \\
\Sp(2,\bZ)&\bZ/12&0&\bZ/12&\bZ/4&\bZ/2 \\
P\Sp(2,\bZ)&\bZ/6&0& \bZ/6&\bZ/2& \bZ/2 \\
\Sp(2,\bZ/4)&\bZ/4&\bZ/2&\bZ/4& \bZ/4\oplus\bZ/2 &  (\bZ/2)^2 \\
P\Sp(2,\bZ/4)&\bZ/2&\bZ/2&\bZ/2& (\bZ/2)^2 & (\bZ/2)^2 \\
\fH = \Sp(2,\bZ/4)/Y &\bZ/4& \bZ/2 & \bZ/4 &\bZ/4\oplus\bZ/2 &  (\bZ/2)^2  \\
\Sp(2,2) & \bZ/2 & 0&\bZ/2 &\bZ/2&\bZ/2 \\ \hline
\end{array} \]
\bigskip

\newpage

\iftrue
\newcommand{\noopsort}[1]{}
\providecommand{\bysame}{\leavevmode\hbox to3em{\hrulefill}\thinspace}
\providecommand{\MR}{\relax\ifhmode\unskip\space\fi MR }
\providecommand{\MRhref}[2]{%
  \href{http://www.ams.org/mathscinet-getitem?mr=#1}{#2}
}
\providecommand{\href}[2]{#2}

\fi

\end{document}